\theoremstyle{plain}
\newtheorem{theorem}{Theorem}[section]
\newtheorem{lemma}[theorem]{Lemma}
\theoremstyle{definition}
\theoremstyle{proposition}
\newtheorem{definition}[theorem]{Definition}
\newtheorem{corollary}[theorem]{Corollary}
\newtheorem{proposition}[theorem]{Proposition}
\numberwithin{equation}{section}
\def\R{\mathbb{R}}
\def \h{\mathcal{H}}
\def\w{\mathbf{w}}
\def\x{\mathbf{x}}
\def\y{\mathbf{y}}
\date{\today}
\begin{document}

\title[Subdivision method in the  Laplacian matching polynomial]
{Subdivision method in the  Laplacian matching polynomial}

\author[Jiang-Chao Wan, Yi Wang]{Jiang-Chao Wan, Yi Wang, Zhi-Yuan Wang}
\address{School of Mathematical Sciences, Anhui University, Hefei 230601, P. R. China}
\email{wanjc@stu.ahu.edu.cn, wangy@ahu.edu.cn, wangzhiyuan9641@126.com}
\thanks{{\it Corresponding author.} Yi Wang}
\thanks{{\it Funding.} Supported by the National Natural Science Foundation of China (No. 12171002)}
\date{\today}

\subjclass[2020]{Primary: 05C31, 05C70. Secondary:  05C05, 05C50,  12D10}

\keywords{Laplacian matching polynomial, Subdivision method, Majorization}

\sloppy

\maketitle

\begin{abstract}
As a bridge connecting the matching polynomial and the Laplacian matching polynomial of graphs,
the subdivision method  is expected to be useful for investigating the Laplacian matching polynomial.
In this paper, we study applications of the method from three aspects.
 We prove that the zero sequence of the Laplacian matching polynomial of a graph majorizes its degree sequence, establishing a dual relation between the Laplacian matching polynomial and the characteristic polynomial of the signless Laplacian matrix of graphs.
 In addition, from different viewpoints, we give a new combinatorial interpretations for the coefficients of the Laplacian matching polynomial.

\end{abstract}


\section{Introduction}

All graphs considered in this article are assumed to be finite, undirected, and without
loops or multiple edges.
Let $G$ be a graph with the vertex set $V(G)$ and the edge set $E(G)$.
Let $M$ be a subset of $E(G)$.
We denote by $V(M)$ the set of vertices of $G$ each of which is an endpoint of an edge in $M$.
If no two distinct edges in $M$ share a common endpoint, then $M$ is called a {\it matching} of $G$.
The set of matchings of $G$ together with the empty set is denoted by $\mathcal{M}(G)$.
In 1972, Heilmann and Lieb \cite{Heilmann1,Heilmann} formally defined the {\it matching polynomial} of $G$ as
$$
\alpha(G,x) = \sum_{M \in \mathcal{M}(G)}(-1)^{|M|}x^{|V(G)\backslash V(M)|}
$$
 in studying statistical physics. It is an interesting object in algebraic combinatorics and has numerous applications in theoretical physics \cite{Heilmann1,Heilmann} and chemistry \cite{Aihara,Gutman}.
We refer the reader to textbooks \cite{Cvet,Godsil} for systemic introduction of the matching polynomial theory.

In  \cite{Heilmann}, Heilmann and Lieb proved that for a graph $G$ with maximum degree $\Delta(G) \geq 2$,
the zeros of $\alpha(G,x)$ lie in the interval $\big(-2\sqrt{\Delta(G)-1}, 2\sqrt{\Delta(G)-1}\big)$.
 Based on this, Marcus, Spielman, and Srivastava \cite{Marcus}  established that there are infinitely many bipartite
 Ramanujan graphs.
Another important result in the matching polynomial theory, summarized in \cite{Cvet, Godsil1},  provides a dual relation between the matching polynomial and the characteristic polynomial of graphs.
Before presenting it,  we need to introduce some needed notations.
Denote by $\omega(G)$ the number of components of $G$.
If $H$ is a subgraph of $G$, then we simply write $G-H$ for the induced subgraph of $G$ on $V(G)\setminus V(H)$.
The {\it adjacency matrix} of $G$, denoted by $A(G)$, is the matrix whose rows and columns are indexed by ${V}(G)$ with the $(u, v)$-entry is $1$ if $u$ and $v$ are adjacent in $G$ and $0$ otherwise.
We always use $\phi(M,x)$ to denote the characteristic polynomial of a square matrix $M$.

\begin{theorem}[\cite{Cvet}, Theorem 4.4]\label{matchingdualitythm}
Let $\mathcal{C}(G)$ be the set of all $2$-regular subgraphs of $G$. Then
\begin{equation}\label{matchingdualityeq1}
\phi(A(G), x)=\alpha(G,x)+\sum_{C \in \mathcal{C}(G)}(-2)^{\omega(C)}\alpha(G-C, x),
\end{equation}
and
\begin{equation}\label{matchingdualityeq2}
\alpha(G,x)=\phi(A(G), x)+\sum_{C \in \mathcal{C}(G)}2^{\omega(C)}\phi(A(G-C), x).
\end{equation}
\end{theorem}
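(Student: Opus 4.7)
The plan is to extract both identities from a single combinatorial expansion of $\det(xI - A(G))$. First I would apply the Leibniz formula and group permutations $\sigma$ on $V(G)$ by their non-fixed-point cycle structure. A permutation contributes nonzero only when every non-fixed cycle corresponds to a closed walk in $G$: each transposition $(u,v)$ requires an edge $uv \in E(G)$, and each longer cycle $(v_{1}, v_{2}, \ldots, v_{k})$ must trace a cycle of $G$. Fixed points contribute $x$, transpositions contribute $(-1)(-1)a_{uv}^{2} = 1$ per edge, and each $k$-cycle with $k \geq 3$ contributes $(-1)^{k}$. Combining these with $\mathrm{sgn}(\sigma) = (-1)^{|V(G)| - c}$, where $c$ is the total number of cycles of $\sigma$, and grouping the $2^{c(H)}$ permutations whose non-fixed structure gives the same elementary subgraph $H$ (a disjoint union of edges and cycles) yields the Sachs-type formula
\begin{equation*}
\phi(A(G),x) = \sum_{H} (-1)^{p(H)} 2^{c(H)} x^{|V(G)| - |V(H)|},
\end{equation*}
where $p(H)$ counts the connected components of $H$ and $c(H)$ counts those components that are cycles.

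Next I would uniquely decompose each elementary subgraph as $H = M \sqcup C$ with $M$ a matching and $C$ a $2$-regular subgraph, and note that $p(H) = |M| + \omega(C)$, $c(H) = \omega(C)$, and $|V(G)| - |V(H)| = |V(G-C) \setminus V(M)|$. Summing over $C$ first and recognizing the inner sum over matchings of $G-C$ as $\alpha(G-C, x)$ gives
\begin{equation*}
\phi(A(G), x) = \sum_{C} (-2)^{\omega(C)} \alpha(G-C, x),
\end{equation*}
the sum ranging over all $2$-regular subgraphs of $G$ including the empty one; isolating the empty term yields \eqref{matchingdualityeq1}.

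For \eqref{matchingdualityeq2}, I would show that the linear operator $T\colon f \mapsto \sum_{C} (-2)^{\omega(C)} f(G-C)$ on graph-indexed functions is inverted by $T'\colon f \mapsto \sum_{C} 2^{\omega(C)} f(G-C)$ (each sum including $C = \varnothing$). Composing and grouping by the union $C = C_{1} \sqcup C_{2}$ of vertex-disjoint $2$-regular subgraphs collapses the coefficient of $f(G-C)$ to
\begin{equation*}
\sum_{j=0}^{k} \binom{k}{j} 2^{j} (-2)^{k-j} = (2 + (-2))^{k},
\end{equation*}
which vanishes unless $k = \omega(C) = 0$. Hence $T' \circ T = \mathrm{id}$, and applying $T'$ to \eqref{matchingdualityeq1} produces \eqref{matchingdualityeq2}.

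The most delicate step will be the sign bookkeeping in the Leibniz expansion: verifying that the contribution of a permutation with $t$ transpositions and $c_{\geq 3}$ long cycles collapses cleanly to $(-1)^{p(H)}$ after combining $\mathrm{sgn}(\sigma) = (-1)^{t + \sum (j-1)\ell_{j}}$ with the $(-1)^{j}$ from each $j$-cycle, and that the overcounting factor is exactly $2^{c(H)}$ (two traversals per long cycle, one per transposition). Once that calculation is pinned down, the remainder is bijective reorganization and the binomial cancellation identity above.
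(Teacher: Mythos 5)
Your argument is correct, but note that the paper does not prove this statement at all: it is quoted verbatim as a known result (\cite{Cvet}, Theorem 4.4; see also \cite{Godsil1}), so there is no in-paper proof to compare against. Your two-step argument is essentially the classical Godsil--Gutman proof. The Sachs-type expansion is sound: each non-fixed cycle of $\sigma$ contributes a net factor of $-1$ after combining $\mathrm{sgn}(\sigma)=(-1)^{n-c}$ with the entry products ($1$ per transposition, $(-1)^k$ per $k$-cycle), the traversal-direction overcount is exactly $2^{c(H)}$, and splitting an elementary subgraph as $H=M\sqcup C$ with $x^{n-|V(H)|}=x^{|V(G-C)\setminus V(M)|}$ yields \eqref{matchingdualityeq1}. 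For \eqref{matchingdualityeq2}, the inversion works because the inner subgraph $C_2$ ranges over $2$-regular subgraphs of $G-C_1$, hence is vertex-disjoint from $C_1$, so grouping by $C=C_1\sqcup C_2$ and summing over the $\binom{k}{j}$ ways to assign components gives the coefficient $(2+(-2))^{k}=0^{k}$; one only needs to observe, as you implicitly do, that \eqref{matchingdualityeq1} is being applied to every induced subgraph $G-C_1$ and not merely to $G$ itself. I see no gap.
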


Inspired by the above classical results
and the breakthrough work of Marcus, Spielman, and Srivastava \cite{Marcus,Marcus2},
Mohammadian \cite{Mohammadian} introduced a new real-rooted graph polynomial, called the {\it Laplacian matching polynomial}, which is defined as
$$
\beta(G, x)=\sum_{M\in\mathcal{M}(G)}(-1)^{|M|}\left(\prod_{v\in V(G)\setminus V(M)}\big(x-d_G(v)\big)\right),
$$
where $d_G(v)$ is the degree of $v$ in $G$.
Furthermore, Mohammadian \cite {Mohammadian} proved that $\beta(G,x)$ is the average of Laplacian characteristic polynomials of all signed graphs with underlying graph $G$, and all zeros of $\beta(G,x)$ lie in  the interval $\big[0, \Delta(G)+2\sqrt{\Delta(G)-1}\big)$ if ${\Delta}(G)\geq 2$.
Independently, Zhang and Chen \cite{Zhang} also studied this polynomial and called it {\it the average Laplacian polynomial} of $G$.
We refer the reader to papers \cite{LiYan,Mohammadian,Wan,Zhang} for more results on the Laplacian matching polynomial.

One of main methods dealing with the Laplacian matching polynomial is called the {\it subdivision method}, which establishes a connection between  the Laplacian matching polynomial of a graph and  the matching polynomial of its subdivision graph.
The method originated from the following identity found by Yan and Yeh  \cite{Yan}, 
\begin{equation}\label{subdivisiontheoremeqqq}
\alpha(S_G, x)=x^{|E(G)|-|V(G)|}\beta(G, x^2),
\end{equation}
where $S_G$ denotes the subdivision graph of $G$, that is, the graph obtained from $G$ by inserting into each edge of $G$ a vertex of degree $2$.

Recently, Wan, Wang and Mohammadian \cite{Wan} proposed a strengthening version of \eqref{subdivisiontheoremeqqq}, which has more potential applications in investigating the Laplacian matching ploynomial. For the sake of simplicity, we restate it by means of a new terminology, that is, the principal Laplacian matching polynomial with respect to a subgraph.
Let $H$ be an induced subgraph of $G$. The {\it principal Laplacian matching polynomial} of $G$ with respect to $H$ is defined to be the polynomial
\begin{equation}\label{inducedlmp}
\beta(G, x)_{[H]}=\sum_{M \in \mathcal{M}(H)}(-1)^{|M|}
\left(\prod_{v \in V(H)\backslash V(M)}\big(x-d_G(v)\big)\right).
\end{equation}
Note that $\beta(G, x)_{[G]}=\beta(G, x)$. Theorem 2.2 of \cite{Wan} can be presented as follows.
\begin{theorem}[\cite{Wan}]\label{gsubdivisiontheorem}
Let $G$ be a graph, and let $W$ be a subset of $V(G)$. Then
$$\alpha (S_G-W, x )= x^{|E(G)|-|V(G)|+|W|}\beta(G, x^2)_{[G-W]}.$$
\end{theorem}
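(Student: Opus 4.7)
The plan is to expand both sides as explicit weighted sums indexed by combinatorial data and then cancel the extraneous terms on the right via a sign-reversing involution. On the left, I would identify each matching $M' \in \M(S_G - W)$ with a pair $(D, f)$, where $D \subseteq V(G-W)$ and $f \colon D \to E(G)$ is an injective function with $f(u)$ incident to $u$; the bijection sends $M'$ to $(D, f)$ defined by $D = V(M') \cap V(G-W)$ and $f(u) = e$ whenever $\{u, v_e\} \in M'$, where $v_e$ is the subdivision vertex of the edge $e$. Since $|M'| = |D|$ and $|V(S_G - W)| = |V(G-W)| + |E(G)|$, this gives
\[
\alpha(S_G - W, x) \;=\; \sum_{(D, f)} (-1)^{|D|}\, x^{|E(G)| + |V(G-W)| - 2|D|}.
\]

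On the right, I would expand each factor $(x^2 - d_G(v))$ in \eqref{inducedlmp} into the monomials $x^2$ and $-d_G(v)$, and interpret each surviving $d_G(v)$ as a choice of an edge $g(v) \in E(G)$ incident to $v$. Grouping this data by the matching $M \in \M(G-W)$, the set $T \subseteq V(G-W) \setminus V(M)$ on which $g$ is defined, and the function $g \colon T \to E(G)$, and folding in the prefactor $x^{|E(G)| - |V(G)| + |W|}$, bookkeeping produces
\[
x^{|E(G)| - |V(G)| + |W|}\, \beta(G, x^2)_{[G-W]} \;=\; \sum_{(M, T, g)} (-1)^{|M| + |T|}\, x^{|E(G)| + |V(G-W)| - 4|M| - 2|T|},
\]
where crucially $g$ need not be injective, only each $g(v) \ni v$.

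The core step is a sign-reversing, weight-preserving involution $\iota$ on the triples $(M, T, g)$ whose fixed points are precisely those with $M = \emptyset$ and $g$ injective. After fixing any total order on $E(G-W)$, call an edge $e \in E(G-W)$ \emph{bad} for $(M, T, g)$ if either $e \in M$ or there exist distinct $v, v' \in T$ with $g(v) = g(v') = e$; these alternatives are mutually exclusive because $V(M) \cap T = \emptyset$. If no bad edge exists, the triple is a fixed point; otherwise, let $e_0 = uu'$ be the smallest bad edge and define $\iota$ by toggling its role: either remove $e_0$ from $M$ and extend $g$ to $T \cup \{u, u'\}$ by $g(u) = g(u') = e_0$, or conversely push $e_0$ into $M$ while deleting $u, u'$ from $T$ and from the domain of $g$. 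The main obstacle is verifying that $\iota$ is actually an involution, i.e.\ that the toggle preserves the set of bad edges so that the minimum $e_0$ is the same in input and output; this reduces to the observation that $e_0$ merely switches from being $M$-bad to being $g$-collision-bad (or vice versa), leaving all other bad edges untouched. A direct count then shows $|M| + |T| \mapsto |M| + |T| \pm 1$ while $4|M| + 2|T|$ is invariant, so $\iota$ reverses signs and preserves the $x$-weight; the surviving sum over fixed points $(\emptyset, T, g)$ with $g$ injective coincides with the left-hand side via the identification $(T, g) = (D, f)$, completing the argument.
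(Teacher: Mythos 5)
Your argument is correct. The identification of matchings of $S_G-W$ with pairs $(D,f)$, $f\colon D\to E(G)$ injective with $u\in f(u)$, is exactly right (two edges $\{u,v_e\}$, $\{u',v_{e'}\}$ of $S_G-W$ are disjoint iff $u\neq u'$ and $e\neq e'$), and the expansion of the right-hand side of \eqref{inducedlmp} with $\prod_{v\in T}d_G(v)$ interpreted as the number of incidence-respecting maps $g\colon T\to E(G)$ gives the stated sum over triples $(M,T,g)$ with matching exponents. The key checks for the involution all go through: a collision edge necessarily has both endpoints in $T\subseteq V(G-W)$, so all bad edges lie in $E(G-W)$ and the two kinds of badness are mutually exclusive since $T\cap V(M)=\emptyset$; the toggle at the minimal bad edge $e_0$ produces a legitimate triple (in particular $M\cup\{e_0\}$ is again a matching because $u,u'\notin V(M)$); and no bad edge other than $e_0$ changes status, since the only new collision that can be created is at $e_0$ itself (a third vertex mapping to $e_0$ would have to be one of its two endpoints). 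Hence the minimal bad edge is stable, $\iota$ is an involution, it flips $(-1)^{|M|+|T|}$ and preserves $4|M|+2|T|$, and the fixed points $(\emptyset,T,g)$ with $g$ injective match the pairs $(D,f)$ on the left.

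For comparison: the paper does not prove this statement at all; it is imported verbatim as Theorem 2.2 of \cite{Wan}, and the proof there (like Yan and Yeh's proof of the special case \eqref{subdivisiontheoremeqqq}) is obtained by a direct expansion and regrouping of the matching generating function rather than by an explicit sign-reversing involution. So your route is genuinely different in flavor: it is fully self-contained, makes the cancellation mechanism explicit, and localizes all the bookkeeping in one involution, at the cost of being somewhat longer than simply grouping the matchings of $S_G-W$ by the set of edges $e$ whose subdivision vertex $v_e$ is matched to \emph{both} of its available neighbours versus one versus none, which yields the identity more quickly. Either way, the statement is established; no gaps.
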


As an application of Theorem \ref{gsubdivisiontheorem}, the authors of \cite{Wan} immediately obtained a sharp upper bound for the largest root of $\beta(G, x)$.
 Roughly speaking, the main idea of the subdivision method is to establish the connection between the Laplacian matching polynomial of graphs  and the matching polynomial of their subdivisions by applying Theorem \ref{gsubdivisiontheorem}.
 It is noticeable that, comparing to \eqref{subdivisiontheoremeqqq}, Theorem \ref{gsubdivisiontheorem} provides a successive vertex-deleted relation of the Laplacian matching polynomial, which is beneficial when we apply mathematical induction to prove some results. One may find this advantage in the proofs of Theorem \ref{interlace} and Theorem \ref{Qdualitythm2}.

%
%
%
%
%
%
%
%
%
%


The present paper is  devoted to exploring more applications of the subdivision method,  and is organized as follows.
In Section \ref{coefficientsection}, from different viewpoints, we give a new combinatorial interpretations for the coefficients of the Laplacian matching polynomial.
In Section \ref{majorizationsection}, we present a vertex-interlacing theorem for the principal Laplacian matching polynomial and further prove that the zero sequence of the Laplacian matching polynomial of a graph majorizes its degree sequence.
In Section \ref{dualitysection}, we establish a dual relation between the Laplacian matching polynomial and the characteristic ploynomial of the signless Laplacian matrix of graphs, which can be viewed as an analogue of Theorem \ref{matchingdualitythm}.
 The final section briefly discusses other potential applications of the subdivision method in the Laplacian matching polynomial of edge-weighted graphs.

\section{Combinatorial interpretation of the coefficients}\label{coefficientsection}

A basic result  due to Zhang and Chen \cite{Zhang} provides a combinatorial interpretation for the coefficients of the Laplacian matching polynomial by means of the weight of a TU-subgraph.
A {\em TU-subgraph} of a graph $G$ is a subgraph whose components are trees or unicyclic graphs.
Suppose that a TU-subgraph $H$ of $G$ consists of $c$ unicyclic graphs and trees $T_1,\ldots,T_t$.
Then, the {\em weight} of $H$ is defined as
$$\w(H)=2^c\prod_{i=1}^t|V(T_i)|.$$

\begin{theorem}[\cite{Zhang}]\label{zhangchen}
	Let $G$ be a graph on $n$ vertices and let $\beta(G, x ) = \sum_{r=0}^n(-1)^r a_rx^{n-r}.$
	Then
	$$a_r=\sum_{H\in \h_r(G)}\w(H),$$
	where $\h_r(G)$ denotes the set of all TU-subgraphs of $G$ with $r$ edges.
\end{theorem}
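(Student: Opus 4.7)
The plan is to derive Theorem \ref{zhangchen} from the subdivision identity together with a bijective analysis of matchings in $S_G$. Specializing Theorem \ref{gsubdivisiontheorem} to $W = \emptyset$ gives
\[
\alpha(S_G, x) = x^{|E(G)|-|V(G)|}\beta(G, x^2).
\]
Writing $\alpha(S_G, x) = \sum_{k \geq 0}(-1)^k m_k(S_G)\, x^{|V(G)|+|E(G)|-2k}$, where $m_k(S_G)$ is the number of $k$-matchings of $S_G$ and $|V(S_G)| = |V(G)|+|E(G)|$, and then comparing the coefficient of $x^{2(n-r)}$ on both sides, I obtain $a_r = m_r(S_G)$. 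Thus it suffices to show $m_r(S_G) = \sum_{H \in \mathcal{H}_r(G)} \w(H)$.

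For this I would set up a bijection between the $r$-matchings $M$ of $S_G$ and pairs $(H,\sigma)$, where $H$ is a subgraph of $G$ with $r$ edges and $\sigma$ is an orientation of $H$ in which every vertex of $G$ has in-degree at most $1$. Given $M$, let $H$ consist of those edges $e = uv$ of $G$ whose subdivision vertex $v_e$ is saturated by $M$, and orient $e$ from the endpoint not matched to $v_e$ toward the one that is; conversely, a pair $(H,\sigma)$ determines a unique $r$-matching of $S_G$. The key verification is that two edges of $S_G$ conflict iff either they share a subdivision vertex (ruled out by taking each edge of $H$ once) or they match two edges of $H$ to a common endpoint in $V(G)$ (ruled out by the in-degree condition).

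To complete the argument I would count the valid orientations component by component. In a component of $H$ with $k$ vertices and $\ell$ edges, the sum of in-degrees equals $\ell$, so if every in-degree is at most $1$ then $\ell \leq k$; thus only trees and unicyclic components can contribute. A tree component $T$ admits exactly $|V(T)|$ valid orientations, one for each choice of the unique in-degree-$0$ source, with all remaining edges forced to point away from it. A unicyclic component forces every vertex to have in-degree exactly $1$, which in turn forces the unique cycle to be consistently oriented (two choices) with all pendant subtree edges oriented away from the cycle. Multiplying over components yields $\w(H)$, which produces $m_r(S_G) = \sum_{H \in \mathcal{H}_r(G)} \w(H)$ and hence the theorem.

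The main obstacle is the bijection step, where one must handle the hybrid incidence structure of $S_G$: each edge of $S_G$ has one endpoint in $V(G)$ and one subdivision endpoint, so a matching conflict can occur at either type and both must be translated correctly into the orientation language. The orientation-counting step is then elementary, though the unicyclic case warrants a careful check that the cycle orientation together with the forced outward orientation on the attached subtrees meets the in-degree-one requirement at every vertex.
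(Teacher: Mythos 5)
Your proposal is correct, and its first half --- specializing Theorem \ref{gsubdivisiontheorem} to $W=\emptyset$ and comparing coefficients to get $a_r = m_r(S_G)$ --- is exactly the paper's route (its Theorem \ref{xomexpar}). Where you diverge is in the counting of $r$-matchings of $S_G$. The paper defines the map sending a matching $\{f_1,\ldots,f_r\}$ to the edge-induced subgraph on $\{e(f_1),\ldots,e(f_r)\}$, proves by induction on $r$ that the image is always a TU-subgraph (Lemma \ref{swelldefinedlemma}), and then computes the fiber sizes component by component by directly counting maximum matchings of subdivided trees and unicyclic graphs: $S_T - v$ has a unique perfect matching for each $v \in V(T)$, giving $|V(T)|$ near-perfect matchings of $S_T$, and $S_U$ has exactly $2$ perfect matchings (Lemmas \ref{ccsubdivionlma1} and \ref{comexpcoefsubdivisionglma}). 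You instead re-encode an $r$-matching of $S_G$ as a set of $r$ edges of $G$ together with an orientation in which every vertex has in-degree at most $1$, and count such orientations. This buys you two things: the restriction to TU-subgraphs falls out immediately from summing in-degrees ($\ell \le k$ per component), with no induction needed, and the fiber counts become a transparent root-choice argument (a tree on $k$ vertices has exactly $k$ orientations with a unique in-degree-$0$ source, a unicyclic component forces all in-degrees to equal $1$ and hence a consistently oriented cycle, giving $2$). The two counting devices are of course equivalent --- a near-perfect matching of $S_T$ missing $v$ is precisely the orientation of $T$ rooted at $v$ --- but your orientation language packages the verification more cleanly; the paper's version has the advantage of isolating reusable statements about matchings of subdivision graphs. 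Both arguments are complete and correct.
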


The purpose of this section is to give a new combinatorial interpretation for the coefficients of the Laplacian matching polynomial and a new proof of Theorem \ref{zhangchen} by applying the subdivision method. Recall that the {\it subdivision graph} of a  graph $G$, denote by $S_G$, is the graph obtained from $G$ by inserting into each edge $e =\{u, v\}$ of $G$ a
vertex of degree $2$, which is also denoted by $e$.
Thus, for an edge $e =\{u, v\}$ of $G$, it corresponds to two edges $\{e, u\}$ and $\{e, v\}$ of $S_G$.
Moreover, one may find that  $S_G$ is a bipartite graph with bipartition $\{V(G), E(G)\}$.

An $r$-matching in $G$ is a matching with $r$ edges.
We denote by $p(G,r)$ the number of $r$-matchings of $G$ with the convention that $p(G,0)=1$.
To obtain the main result of this section, we need to determine the numbers of the maximum matchings of the subdivision graphs of trees and unicyclic graphs.

\begin{lemma}\label{ccsubdivionlma1}
	Let $T$ be a tree with at least two vertices. Then the graph $S_T-v$ has exactly one perfect matching for any $v\in V(T)$.
\end{lemma}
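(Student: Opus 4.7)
The natural plan is induction on $n = |V(T)|$, using a pendant vertex of $T$ distinct from $v$ to force a unique match and then peel off to a smaller subdivision graph.

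For the base case $n=2$, the tree $T$ is a single edge $\{u,v\}$, so $S_T$ has three vertices $u, v, e$ (where $e$ is the subdivision vertex of the edge) and two edges $\{u,e\}, \{v,e\}$; deleting any original vertex leaves a single edge, hence exactly one perfect matching.

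For the inductive step, assume $n \geq 3$ and the result holds for all smaller trees. Since $T$ has at least two leaves, I can choose a leaf $\ell \in V(T)$ with $\ell \neq v$. Let $w$ be the unique neighbor of $\ell$ in $T$ and let $e$ be the subdivision vertex corresponding to the edge $\{\ell,w\}$. In $S_T - v$ the vertex $\ell$ has degree one, its only neighbor being $e$, so in any perfect matching of $S_T - v$ the edge $\{\ell,e\}$ must be used. Removing $\ell$ and $e$ yields the graph $S_T - \{v,\ell,e\}$, which I would identify with $S_{T-\ell} - v$ by observing that deleting $\ell$ and the subdivision vertex $e$ from $S_T$ simply deletes the subdivided edge and leaves exactly the subdivision of $T - \ell$; since $v \neq \ell$, the vertex $v$ still lies in $V(T-\ell)$. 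Because $T - \ell$ is a tree on $n-1 \geq 2$ vertices, the inductive hypothesis gives a unique perfect matching of $S_{T-\ell} - v$, and combining this with the forced edge $\{\ell,e\}$ gives a unique perfect matching of $S_T - v$.

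The only subtlety I anticipate is bookkeeping in the identification $S_T - \{\ell,e\} = S_{T-\ell}$, which requires checking that deleting $\ell$ and $e$ exactly removes the two edges $\{\ell,e\}$ and $\{w,e\}$ of $S_T$ and leaves the remaining vertex and edge sets intact. This is routine from the definition of the subdivision graph, so no real obstacle arises; the argument is clean once the correct leaf is chosen.
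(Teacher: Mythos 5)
Your proof is correct, and it takes a somewhat different route from the paper's. The paper argues in two separate halves: existence is asserted by a (sketched) induction on $|V(T)|$, and uniqueness is obtained by noting that two distinct perfect matchings of $S_T-v$ would have a symmetric difference containing a cycle, contradicting the fact that $S_T$ is a tree. You instead run a single induction that delivers existence and uniqueness simultaneously: a leaf $\ell\neq v$ of $T$ becomes a degree-one vertex of $S_T-v$, so the edge $\{\ell,e\}$ is forced in every perfect matching, and peeling off $\ell$ and $e$ reduces cleanly to $S_{T-\ell}-v$. The trade-off is that the paper's uniqueness argument is slicker and avoids the bookkeeping identification $S_T-\{\ell,e\}=S_{T-\ell}$ (which you correctly flag and which does hold), while your argument is fully constructive and actually supplies the details of the existence claim that the paper dismisses as ``not hard to see.'' Both are complete proofs; yours is the more self-contained of the two.
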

\begin{proof} 
	By induction on $|V(T)|$, it is not hard to see that $S_T-v$ has at least one perfect matching.
	If there are two perfect matchings in $S_T-v$ , then we can find a cycle in $S_T-v$, which contradicts that $T$ is a tree.
\end{proof}

\begin{lemma}\label{comexpcoefsubdivisionglma}
	The following statements hold.
	\begin{enumerate}
		\item If $T$ is a tree, then $p(S_T, |E(T)|)=|V(T)|$. 
		\item If $U$ is a unicyclic graph, then $p(S_U, |E(U)|)=2$.
	\end{enumerate}
\end{lemma}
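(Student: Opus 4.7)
The plan is as follows. For part (1), the proof comes essentially for free from the bipartite structure of $S_T$. Note that $S_T$ is bipartite with parts $V(T)$ and $E(T)$, where $|V(T)| = |E(T)| + 1$ since $T$ is a tree. Any matching of size $|E(T)|$ in a bipartite graph saturates exactly $|E(T)|$ vertices on each side, so such a matching covers every vertex of $E(T)$ and all but one vertex of $V(T)$. Hence an $|E(T)|$-matching of $S_T$ is determined by a choice of uncovered vertex $v \in V(T)$ together with a perfect matching of $S_T - v$. By Lemma~\ref{ccsubdivionlma1}, each $S_T - v$ has exactly one perfect matching, which yields $p(S_T, |E(T)|) = |V(T)|$.

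For part (2), observe that $|V(U)| = |E(U)|$, so $S_U$ has $2|E(U)|$ vertices and an $|E(U)|$-matching is a perfect matching. I would argue by induction on $|V(U)|$. The base case is $U = C_k$, a cycle of length $k \geq 3$; then $S_U = C_{2k}$ is an even cycle, which has exactly $2$ perfect matchings. For the inductive step, if $U$ is not a cycle, then $U$ has a pendant vertex $u$ with unique neighbor $w$, connected by the edge $f = \{u,w\}$. In $S_U$, the vertex $u$ has degree $1$, its only neighbor being $f$, so any perfect matching of $S_U$ must contain the edge $\{u, f\}$. Deleting the vertices $u$ and $f$ from $S_U$ yields precisely $S_{U-u}$, which is the subdivision graph of $U - u$. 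Since $u$ lies off the unique cycle of $U$, the graph $U - u$ is again connected and unicyclic with strictly fewer vertices. Therefore perfect matchings of $S_U$ are in bijection with perfect matchings of $S_{U-u}$, and the inductive hypothesis yields the count $2$.

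No step in this outline appears to present a serious obstacle; both parts reduce quickly to Lemma~\ref{ccsubdivionlma1} or to a direct count on an even cycle. The only minor verification required is that deleting $\{u,f\}$ from $S_U$ in the inductive step really does produce $S_{U-u}$ exactly. This is immediate once one observes that the edges of $S_U$ destroyed by the deletion are exactly $\{u,f\}$ and $\{w,f\}$, which are precisely the edges of $S_U$ associated with the deleted edge $f$ of $U$; no other edges incident to $u$ or $f$ exist in $S_U$ because $u$ has degree $1$ in $U$ and $f$ has degree $2$ in $S_U$ by construction.
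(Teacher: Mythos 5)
Your proof is correct. Part (1) is essentially the paper's argument, only spelled out more fully: the paper likewise notes that $S_T$ is bipartite with parts $V(T)$ and $E(T)$, $|V(T)|=|E(T)|+1$, and invokes Lemma~\ref{ccsubdivionlma1} to get one perfect matching of $S_T-v$ for each choice of the uncovered vertex $v$. Part (2) is where you genuinely diverge. The paper handles the non-cycle case by a single global decomposition: it contracts the unique cycle $C$ of $U$ to a vertex $v$, producing a tree $T$, and asserts that every perfect matching of $S_U$ factors as a perfect matching of $S_C$ (two choices, since $S_C$ is an even cycle) together with a perfect matching of $S_T-v$ (one choice, by Lemma~\ref{ccsubdivionlma1}), giving $2\cdot 1=2$. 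You instead induct on $|V(U)|$, peeling off a pendant vertex $u$ at each step: the degree-one vertex $u$ of $S_U$ forces the edge $\{u,f\}$ into any perfect matching, and $S_U-u-f=S_{U-u}$, so the count is preserved down to the base case $S_{C_k}=C_{2k}$. Your route is more self-contained and each step is immediate to verify, at the cost of an induction; the paper's route is shorter but its key factorization claim (that perfect matchings of $S_U$ split along the cycle/tree decomposition) is stated without proof and takes a moment's thought to justify. Both ultimately rest on the same two facts: an even cycle has exactly two perfect matchings, and $S_T-v$ has exactly one.
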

\begin{proof}
	Statement (1) is trivial for $|V(T)|=1$. Assume that $|V(T)|\geq 2.$
	Note that $S_T$ is a bipartite graph with bipartition $\{V(T), E(T)\}$, where $|V(T)| =|E(T)|+1$.  For any $v\in V(T)$, by Lemma \ref{ccsubdivionlma1}, $S_T-v$ has exactly one perfect matching, which consists of $|E(T)|$ edges. Statement (1) follows.

By the definiton of the subdivision graph,  $S_U$ is a bipartite graph with bipartition $\{V(U), E(U)\}$, where $|V(U)|=|E(U)|$.
If $U$ is a cycle, it is clear that $p(S_U, |E(U)|)=2$.
Assume that $U$ contain at least one vertex of degree $1$. 
Denote by $C$ the unique cycle in $U$, and denote by $T$ the tree obtained from $U$ by contracting $C$ to a new vertex $v$. Clearly, $S_C$ has exactly $2$ perfect matchings and $S_T-v$ has exactly one perfect matching by  Lemma \ref{ccsubdivionlma1}. 
Note that each perfect matching of $S_U$ is composed of a perfect matchings of $S_C$ and a perfect matchings of $S_T-v$. Therefore, $p(S_U, |E(U)|)=2$. This proves  statement (2).
\end{proof}

For each edge $f\in E(S_G)$,
denote by $e(f)$ the unique edge of $G$ corresponding to  $f$.
The following result asserts that every $r$-matching of $S_G$ uniquely corresponds to a TU-subgraph
 of $G$ with $r$ edges.
\begin{lemma}\label{swelldefinedlemma}
Let $G$ be a graph.
Then, for each $r$-matching $\{f_1,\ldots,f_r\}$ of $S_G$,
the edge-induced subgraph of $G$ induced by edge set $\{e(f_1),\ldots,e(f_r)\}$ is a TU-subgraph of $G$.
\end{lemma}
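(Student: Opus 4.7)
The plan is to reduce the statement to a simple counting inequality $|E_C|\leq |V_C|$ for each connected component $C$ of the edge-induced subgraph. Combined with the automatic inequality $|E_C|\geq |V_C|-1$ from connectedness, this forces every component to be either a tree or a unicyclic graph, which is precisely the definition of a TU-subgraph.

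First I would verify that the map $f\mapsto e(f)$ is injective on the matching $\{f_1,\dots,f_r\}$. For each edge $e=\{u,v\}\in E(G)$, the two edges $\{e,u\}$ and $\{e,v\}$ of $S_G$ lying above $e$ both contain the subdivision-vertex $e$, so at most one of them can belong to any matching of $S_G$. Consequently $\{e(f_1),\dots,e(f_r)\}$ consists of $r$ distinct edges of $G$, and the edge-induced subgraph $H$ has exactly $r$ edges, which legitimizes the counting below.

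The main step is then a pigeonhole argument on a single component. Fix a component $C$ of $H$ with vertex set $V_C$ and edge set $E_C$, and consider those $f_i$ whose $G$-image $e(f_i)$ lies in $E_C$; there are exactly $|E_C|$ such $f_i$. Each of them has the form $\{e(f_i),v\}$ with $v\in V_C$, and the subdivision-vertex endpoints $e(f_i)$ are automatically pairwise distinct. Because $\{f_1,\dots,f_r\}$ is a matching, the $V(G)$-side endpoints must also be pairwise distinct, giving $|E_C|$ distinct vertices inside $V_C$. Hence $|E_C|\leq |V_C|$, as required.

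I do not anticipate a serious obstacle: the heart of the argument is one line of pigeonhole that exploits the bipartite structure of $S_G$ together with the matching condition. The only mild subtlety is observing that the component-wise analysis is legitimate, which follows from the fact that if $e(f_i)\in E_C$ then both endpoints of $e(f_i)$ — and in particular the $V(G)$-endpoint of $f_i$ — automatically lie in $V_C$, so the $f_i$'s attached to $C$ do not interact with the rest of $H$.
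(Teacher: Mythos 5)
Your proof is correct, and it takes a noticeably more direct route than the paper's. The paper argues by induction on $r$: it assumes the subgraph induced by $\{e(f_1),\dots,e(f_{r-1})\}$ is a TU-subgraph and then runs a case analysis on how $e(f_r)$ attaches (joining two tree components, closing a cycle in a tree component, attaching to a unicyclic component, etc.), ruling out the bad cases by observing that they would create a component $A$ with $|E(A)|=|V(A)|+1$, which contradicts the inequality $|E(C)|\leq |V(C)|$ obtained from the fact that the matching saturates the part $E(H)$ of $S_H$. You noticed that this last counting inequality is already the whole story: your pigeonhole argument (each $f_i$ attached to a component $C$ pairs a distinct edge-vertex of $E_C$ with a distinct $G$-vertex in $V_C$, hence $|E_C|\leq|V_C|$), combined with $|E_C|\geq|V_C|-1$ for a connected component, immediately forces each component to be a tree or unicyclic. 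This eliminates both the induction and the case analysis, and you also supply two small points the paper leaves implicit, namely the injectivity of $f\mapsto e(f)$ on a matching and the fact that the $f_i$ attached to a component $C$ have both endpoints associated with $C$, so the component-wise count is legitimate. The paper's inductive formulation buys nothing extra here; your version is shorter and, if anything, more transparent.
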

\begin{proof}
We prove the assertion by induction on $r$.
It is valid for $r=1$.
Suppose that $r \geq 2$ and the assertion holds for all positive integers less than $r$.
Let  $\{f_1,\ldots,f_r\}$ be an $r$-matching of $S_G$, and let $H$ be the  subgraph of $G$ induced by the edge set $\{e(f_1),\ldots,e(f_r)\}$.
Observe that $\{f_1,\ldots,f_r\}$ is an $r$-matching of $S_H$
that saturates the part $E(H)$, so we get that $|E(C)|\leq |V(C)|$ for every component $C$ of $H$.
By the induction hypothesis, the subgraph of $G$ induced by the edge set $\{e(f_1),\ldots,e(f_{r-1})\}$, say $\widehat{H}$,  is a TU-subgraph of $G$.
If $|e(f_r)\cap V(\widehat{H})|$ is equal to $0$ or $1$, then $H$ is also a TU-subgraph of $G$.
Assume that $|e(f_r)\cap V(\widehat{H})|=2$.
It is obvious that $H$ is a TU-subgraph of $G$ in the following three cases:
(1) $e(f_r)$ connects two tree components of $\widehat{H}$;
(2) $e(f_r)$ connects a tree component and a unicyclic component of $\widehat{H}$;
(3) $e(f_r)$ connects two vertices in a tree component of $\widehat{H}$.
It remains to consider that $e(f_r)$ connects two vertices in a unicyclic component of $\widehat{H}$ or it connects two unicyclic components of $\widehat{H}$.
In each case, we find that the new generated component, say $A$, has the property that $|E(A)|=|V(A)|+1>|V(A)|$, a contradiction.
\end{proof}

\begin{theorem}\label{subdivisionTU-subgraphs}
Let $G$ be a graph.
Then, for every natural number $r$, we have
\begin{equation}\label{rmequalTUidentity}
p(S_G,r)=\sum_{H\in \h_r(G)}\w(H),
\end{equation}
where $\h_r(G)$ denotes the set of all TU-subgraphs of $G$ with $r$ edges.
\end{theorem}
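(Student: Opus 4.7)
The plan is to partition the set of $r$-matchings of $S_G$ according to the TU-subgraph of $G$ they induce, and then count within each fiber using the component-wise enumeration supplied by Lemma \ref{comexpcoefsubdivisionglma}.

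First, I would invoke Lemma \ref{swelldefinedlemma} to assign to each $r$-matching $\{f_1,\ldots,f_r\}$ of $S_G$ the edge-induced subgraph $H$ of $G$ on $\{e(f_1),\ldots,e(f_r)\}$, which the lemma guarantees is a TU-subgraph with exactly $r$ edges. This gives a well-defined map from the set of $r$-matchings of $S_G$ into $\mathcal{H}_r(G)$. So
\[
p(S_G,r)=\sum_{H\in\h_r(G)} N(H),
\]
where $N(H)$ is the number of $r$-matchings of $S_G$ whose image under the above map equals $H$. By construction, $N(H)$ equals the number of matchings of $S_H$ of size $|E(H)|$, i.e. matchings of $S_H$ that saturate the part $E(H)$ of the bipartition.

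Second, I would use the fact that $S_H$ is the disjoint union of the subdivision graphs $S_{T_1},\ldots,S_{T_t},S_{U_1},\ldots,S_{U_c}$ of the tree components $T_1,\ldots,T_t$ and unicyclic components $U_1,\ldots,U_c$ of $H$. An $|E(H)|$-matching of $S_H$ corresponds bijectively to a choice, in each component $S_{T_i}$ (respectively $S_{U_j}$), of a matching of size $|E(T_i)|$ (respectively $|E(U_j)|$). Hence
\[
N(H)=\left(\prod_{i=1}^{t}p(S_{T_i},|E(T_i)|)\right)\left(\prod_{j=1}^{c}p(S_{U_j},|E(U_j)|)\right).
\]
Now Lemma \ref{comexpcoefsubdivisionglma}(1) gives $p(S_{T_i},|E(T_i)|)=|V(T_i)|$, and Lemma \ref{comexpcoefsubdivisionglma}(2) gives $p(S_{U_j},|E(U_j)|)=2$. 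Therefore $N(H)=2^{c}\prod_{i=1}^{t}|V(T_i)|=\w(H)$.

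Summing over $H\in\h_r(G)$ yields the identity \eqref{rmequalTUidentity}. The main obstacle is really already shouldered by Lemma \ref{swelldefinedlemma} and Lemma \ref{comexpcoefsubdivisionglma}: the former ensures the induced edge-set truly is a TU-subgraph (ruling out configurations where two matching edges could create a component with more edges than vertices, which would overcount), while the latter supplies the exact per-component count. Once both are in hand, the counting argument above is a direct multiplicative decomposition across components.
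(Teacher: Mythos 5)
Your proposal is correct and follows essentially the same route as the paper: both define the map sending an $r$-matching of $S_G$ to the induced TU-subgraph (well-defined by Lemma \ref{swelldefinedlemma}), reduce to counting the fiber over each $H\in\h_r(G)$, and evaluate it component-by-component via Lemma \ref{comexpcoefsubdivisionglma} to get $\w(H)$. Your write-up just makes the fiber identification with $|E(H)|$-matchings of $S_H$ and the multiplicative decomposition slightly more explicit.
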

\begin{proof}
Let $\mathcal{M}_r(S_G)$ be the set of $r$-matchings of $S_G$.
Consider the map $\chi: \mathcal{M}_r(S_G) \rightarrow \h_r(G)$
which sends every $r$-matching $\{f_1,\ldots,f_r\}$ of $S_G$ to the  subgraph of $G$ induced by the edge set $\{e(f_1),\ldots,e(f_r)\}$.
Clearly, the map $\chi$ is well-defined by Lemma \ref{swelldefinedlemma}.
To prove \eqref{rmequalTUidentity}, it suffices to prove that, for each TU-subgraph $H\in \h_r(G)$,
the cardinality of the inverse image $\chi^{-1}(H)$ of $H$ is equal to $\w(H)$.
Applying the multiplication principle, we just need to consider the contribution of each component of $H$.
More precisely, for a component $Y$ of $H$ with $s$ edges, we need to prove that
the number of $s$-matchings of $S_Y$ is equal to $s+1$ if
$Y$ is a tree, and is equal to $2$ if $Y$ is a unicyclic graph.
This is provided by Lemma \ref{comexpcoefsubdivisionglma}. The result follows.
\end{proof}

Combining Theorem \ref{gsubdivisiontheorem} and Theorem \ref{subdivisionTU-subgraphs}, we immediately obtain a new combinatorial interpretation for the coefficients of the Laplacian matching polynomial.

\begin{theorem}\label{xomexpar}
Let $G$ be a graph on $n$ vertices and let $\beta(G, x ) = \sum_{r=0}^n(-1)^r a_rx^{n-r}.$
Then
$$a_r=p(S_G,r)=\sum_{H\in \h_r(G)}\w(H),$$
where $\h_r(G)$ denotes the set of all TU-subgraphs of $G$ with $r$ edges.
\end{theorem}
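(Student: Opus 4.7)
The plan is to combine Theorem \ref{gsubdivisiontheorem} with Theorem \ref{subdivisionTU-subgraphs} by matching coefficients on both sides of the subdivision identity. The strategy has essentially no conceptual obstacle, since the two ingredients have already been established; the only care required is bookkeeping of degrees in the substitution $x \mapsto x^2$.

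First, I would apply Theorem \ref{gsubdivisiontheorem} with the choice $W = \emptyset$. This recovers the classical Yan--Yeh identity
\begin{equation*}
\alpha(S_G, x) = x^{|E(G)| - |V(G)|}\, \beta(G, x^2).
\end{equation*}
Next, I would expand both sides as polynomials in $x$. On the left, since $S_G$ has $|V(G)| + |E(G)|$ vertices, the definition of the matching polynomial gives
\begin{equation*}
\alpha(S_G, x) = \sum_{r \geq 0}(-1)^r\, p(S_G, r)\, x^{|V(G)| + |E(G)| - 2r}.
\end{equation*}
On the right, starting from $\beta(G, x) = \sum_{r=0}^{n}(-1)^r a_r x^{n-r}$ with $n = |V(G)|$ and substituting $x \mapsto x^2$, then multiplying by $x^{|E(G)| - |V(G)|}$, I obtain
\begin{equation*}
x^{|E(G)| - |V(G)|}\, \beta(G, x^2) = \sum_{r=0}^{n}(-1)^r a_r\, x^{|V(G)| + |E(G)| - 2r}.
\end{equation*}

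Comparing the coefficient of $x^{|V(G)| + |E(G)| - 2r}$ on both sides gives $a_r = p(S_G, r)$ for every $r$, which is the first equality in the theorem. The second equality, $p(S_G, r) = \sum_{H \in \h_r(G)} \w(H)$, is then exactly the content of Theorem \ref{subdivisionTU-subgraphs}, so chaining the two identities finishes the proof. The only step that one has to verify carefully is the degree arithmetic in the substitution $x \mapsto x^2$, and that is a direct calculation rather than a genuine obstacle.
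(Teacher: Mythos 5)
Your proposal is correct and follows essentially the same route as the paper: both apply Theorem \ref{gsubdivisiontheorem} with $W=\emptyset$ to get the Yan--Yeh identity, compare coefficients to conclude $a_r=p(S_G,r)$, and then invoke Theorem \ref{subdivisionTU-subgraphs}. The only cosmetic difference is that the paper writes the identity in the inverted form $\beta(G,x)=x^{(n-m)/2}\alpha(S_G,x^{1/2})$ before expanding, whereas you match coefficients directly in $\alpha(S_G,x)=x^{m-n}\beta(G,x^2)$; the degree bookkeeping is identical.
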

\begin{proof}
Let $m=|E(G)|$.
Applying Theorem \ref{gsubdivisiontheorem}, we have
	\begin{align*}
	\beta(G, x )&=x^{\frac{n-m}{2}}\alpha(S_G, x^{\frac{1}{2}})\\
	&=x^{\frac{n-m}{2}}\sum_{r\geq 0}(-1)^rp(S_G,r) x^{\frac{n+m-2r}{2}}   \\
	&=\sum_{r\geq 0}(-1)^r p(S_G,r) x^{n-r}.
	\end{align*}
This implies that $a_r=p(S_G,r)$. The assertion follows from this fact and Theorem \ref{subdivisionTU-subgraphs}.
\end{proof}

\section{Interlacing and majorization}\label{majorizationsection}

Let $\alpha_1\leq\cdots\leq\alpha_{n-1}$ and $\beta_1\leq\cdots\leq\beta_n$
be respectively the zeros of two real-rooted polynomials $f$ and $g$ with $\deg f=n-1$ and $\deg g=n$.
We say that the zeros of $f$ {\it interlace} the zeros of $g$ if
$$\beta_1\leq\alpha_1\leq\beta_2\leq \cdots \leq  \alpha_{n-1}\leq\beta_n.$$
Heilmann and Lieb \cite{Heilmann} proved that, for each vertex $v$ of a graph $G$,
the zeros of $\alpha(G-v,x)$ interlace the zeros of $\alpha(G,x)$.
Moreover, \cite[Corollary 6.1.3]{Godsil} states that if $G$ is connected, then  the largest zero of $\alpha(G,x)$ has the multiplicity $1$ and is greater than the largest zero of $\alpha(G-v,x)$.
Combining these facts and Theorem \ref{gsubdivisiontheorem}, we obtain the following vertex-interlacing theorem for the principal Laplacian matching polynomial.
We would like to remark that an edge-interlacing theorem for the Laplacian matching polynomials is already shown in \cite[Theorem 3.10]{Wan}.
For a real-rooted polynomial $f(x)$, in what follows, we respectively write $\lambda_{\max}(f(x))$ and $\lambda_{\min}(f(x))$ for the largest and the least root of $f(x)$.

\begin{theorem}\label{interlace}
Let $H$ be an induced subgraph of a graph $G$.
Then the zeros of $\beta(G,x)_{[H-v]}$ interlace the zeros of $\beta(G,x)_{[H]}$ for each vertex $v$ of $H$.
In particular, if $H$ is connected, then the largest zero of $\beta(G,x)_{[H]}$ has the multiplicity $1$ and is greater than the largest zero of $\beta(G,x)_{[H-v]}$.
\end{theorem}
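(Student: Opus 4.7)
The plan is to translate Theorem \ref{interlace} into an interlacing statement for the matching polynomials of certain subdivision graphs, invoke the Heilmann--Lieb vertex-interlacing theorem together with the connected-graph sharpening \cite[Corollary 6.1.3]{Godsil} both recalled just above, and then transfer the conclusion back through the substitution $x\mapsto x^{2}$ built into Theorem \ref{gsubdivisiontheorem}.

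Concretely, I would set $W=V(G)\setminus V(H)$ and $\Gamma=S_{G}-W$, so that $G-W=H$ (because $H$ is induced), and apply Theorem \ref{gsubdivisiontheorem} both to $W$ and to $W\cup\{v\}$ to obtain
$$
\alpha(\Gamma,x)=x^{|E(G)|-|V(H)|}\,\beta(G,x^{2})_{[H]},\qquad \alpha(\Gamma-v,x)=x^{|E(G)|-|V(H)|+1}\,\beta(G,x^{2})_{[H-v]}.
$$
By Heilmann--Lieb, the zeros of $\alpha(\Gamma-v,x)$ interlace those of $\alpha(\Gamma,x)$. Since any matching polynomial contains monomials of only one parity, both of these polynomials have zero sets symmetric about the origin; combined with the displayed identities, their positive zeros, counted with multiplicity, are precisely the numbers $\sqrt{\mu}$ as $\mu$ ranges over the positive zeros of $\beta(G,x)_{[H]}$ and $\beta(G,x)_{[H-v]}$, respectively, with every remaining zero sitting at $0$. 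The symmetric interlacing on the whole line then forces the positive zeros on each side to interlace, and squaring converts this into exactly the desired interlacing of the zero sequences of $\beta(G,x)_{[H-v]}$ and $\beta(G,x)_{[H]}$.

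For the ``in particular'' part I would examine the connectedness of $\Gamma$. Observe that $\Gamma$ keeps a subdivision vertex for \emph{every} edge of $G$: those from edges inside $V(H)$ sit in $S_{H}$, those from edges with exactly one endpoint in $V(H)$ are pendants attached to $S_{H}$, and those from edges entirely inside $W$ are isolated vertices of $\Gamma$. Deleting the isolated vertices yields a graph $\Gamma_{0}$ which is $S_{H}$ together with these pendants, and when $H$ is connected so is $\Gamma_{0}$. Applying \cite[Corollary 6.1.3]{Godsil} to $\Gamma_{0}$ gives that $\lambda_{\max}(\alpha(\Gamma_{0},x))$ is simple and strictly greater than $\lambda_{\max}(\alpha(\Gamma_{0}-v,x))$; the isolated vertices contribute only extra zeros at $0$, so the same simplicity and strict inequality pass to $\alpha(\Gamma,x)$ and $\alpha(\Gamma-v,x)$, and squaring delivers the claim for $\beta(G,x)_{[H]}$ and $\beta(G,x)_{[H-v]}$. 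The main obstacle I anticipate is the bookkeeping in the transfer step: one must carefully account for the zeros of $\alpha(\Gamma,x)$ at $0$, which come both from the prefactor $x^{|E(G)|-|V(H)|}$ and from any zeros of $\beta(G,x)_{[H]}$ at the origin, and match them against the analogous zeros of $\alpha(\Gamma-v,x)$, but this is made tractable by the opposite parities of $\alpha(\Gamma,x)$ and $\alpha(\Gamma-v,x)$ and the resulting compatibility of their multiplicities at $0$ under interlacing.
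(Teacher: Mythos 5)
Your proposal is correct and follows essentially the same route as the paper: pass to $S_G-W$ via Theorem \ref{gsubdivisiontheorem}, invoke the Heilmann--Lieb vertex-interlacing theorem and \cite[Corollary 6.1.3]{Godsil}, use the symmetry of the matching-polynomial zeros about the origin to transfer interlacing to the positive zeros and square, and handle the ``in particular'' part by isolating the nontrivial component of $S_G-W$ containing $S_H$ (your $\Gamma_0$ is exactly the paper's $\widehat{H}$). The only difference is that you describe that component more explicitly as $S_H$ with pendant subdivision vertices attached, which is a harmless refinement.
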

\begin{proof}
Since $H$ is an induced subgraph of $G$, there is a subset $W$ of $V(G)$ such that $H=G-W$.
Given  a vertex $v\in V(H)$, and let $h=|V(H)|$. Note that Theorem \ref{gsubdivisiontheorem} implies that all roots of any principal Laplacian matching polynomial of $G$ are real and nonnegative. 
Let $\alpha_1\leq\cdots\leq\alpha_{h-1}$ and $\beta_1\leq\cdots\leq\beta_{h}$ be  the zeros  of  $\beta(G,x)_{[H-v]}$ and $\beta(G,x)_{[H]}$, respectively.
By Theorem \ref{gsubdivisiontheorem},  the zeros of $\alpha(S_G-W-v, x)$ and  $\alpha(S_G-W, x)$ can be arranged as 
	$$-\sqrt{\alpha_{h-1}} \leq \cdots \leq -\sqrt{\alpha_1}\leq \cdots \leq \sqrt{\alpha_1}\leq\cdots\leq\sqrt{\alpha_{h-1}},$$ and
$$-\sqrt{\beta_{h}} \leq  \cdots \leq -\sqrt{\beta_1} \leq \cdots \leq \sqrt{\beta_1}\leq\cdots\leq\sqrt{\beta_{h}},$$ respectively.
 By \cite[Corollary 6.1.3]{Godsil}, the zeros of $\alpha(S_G-W-v, x)$ interlace the zeros of $\alpha(S_G-W, x)$, then we have $$ \sqrt{\beta_1}\leq\sqrt{\alpha_1}\leq\sqrt{\beta_2}\leq\cdots\leq\sqrt{\alpha_{h-1}}\leq\sqrt{\beta_h},$$
which implies that
	$ \beta_1\leq\alpha_1\leq\beta_2\leq\cdots\leq\alpha_{h-1}\leq\beta_h$, as desired.

We next prove the `in particular' statement.
It is trivial for the case $|V(H)|=1$.
Assume that $H$ is connected with $|V(H)|\geq 2$.
Note that $S_G-W$ consists of some possible isolated vertices and a nontrivial component containing $S_H$, say $\widehat{H}$.
Then, we have 
\begin{equation}\label{equation3.11111}
\lambda_{\max}(\alpha(S_G-W,x))=\lambda_{\max}(\alpha(\widehat{H} ,x))
\end{equation}
with the same multiplicity.
Moreover, \cite[Corollary 6.1.3]{Godsil} states that the largest zero of $\alpha(\widehat{H},x)$ has the multiplicity $1$ and
\begin{equation}\label{equation3.11122}
\lambda_{\max}(\alpha(\widehat{H},x))> \lambda_{\max}(\alpha(\widehat{H}-v,x)).
\end{equation}
Applying Theorem \ref{gsubdivisiontheorem}, we conclude that the largest zero of $\beta(G,x)_{[H]}$ has the multiplicity $1$.
Since $S_G-W-v$ is the union of $\widehat{H}-v$ and some possible isolated vertices,
  we have
\begin{equation}\label{equation3.11133}
\lambda_{\max}(\alpha(S_G-W-v,x))=\lambda_{\max}(\alpha(\widehat{H}-v,x)).
\end{equation}
Combining \eqref{equation3.11111}--\eqref{equation3.11133} and Theorem \ref{gsubdivisiontheorem}, we deduce that
\begin{align*}
\lambda_{\max}(\beta(G,x)_{[H]})
=&\big(\lambda_{\max}(\alpha(S_G-W,x))\big)^2 && \text {(by Theorem \ref{gsubdivisiontheorem})} \\
=&\big(\lambda_{\max}(\alpha(\widehat{H},x))\big)^2 && \text {(by \eqref{equation3.11111})}\\
>&\big(\lambda_{\max}(\alpha(\widehat{H}-v,x))\big)^2 && \text {(by \eqref{equation3.11122})}  \\
=&\big(\lambda_{\max}(\alpha(S_G-W-v,x))\big)^2  && \text {(by \eqref{equation3.11133})}  \\
=&\lambda_{\max}(\beta(G,x)_{[H-v]}). && \text {(by Theorem \ref{gsubdivisiontheorem})}
\end{align*}
This completes the proof.
\end{proof}

\begin{corollary}\label{maxmimbound}
Let $G$ be a connected graph with minimum degree $\delta$. Then
 $\lambda_{\min}(\beta(G, x))\leq  \delta $ with equality holds if and only if $|V(G)|=1$.
\end{corollary}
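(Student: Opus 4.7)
My plan is to combine Theorem \ref{interlace} with a direct computation of $\beta(G,x)_{[H]}$ when $H$ is a very small induced subgraph. Let $v\in V(G)$ satisfy $d_G(v)=\delta$. Straight from the definition \eqref{inducedlmp}, the principal Laplacian matching polynomial with respect to the one-vertex induced subgraph is $\beta(G,x)_{[\{v\}]}=x-\delta$, whose only root is $\delta$. This is the ``seed'' from which the bound will propagate.

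To promote this to all of $G$, I would fix a chain of induced subgraphs
$$\{v\}=H_1\subset H_2\subset\cdots\subset H_n=G,$$
with each $H_i$ obtained from $H_{i+1}$ by deleting a single vertex. Applying Theorem \ref{interlace} at each step, the zeros of $\beta(G,x)_{[H_i]}$ interlace those of $\beta(G,x)_{[H_{i+1}]}$; in particular the smallest zero is nonincreasing along the chain, giving
$$\lambda_{\min}(\beta(G,x))=\lambda_{\min}(\beta(G,x)_{[H_n]})\leq\cdots\leq\lambda_{\min}(\beta(G,x)_{[H_1]})=\delta,$$
which yields the inequality.

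For the equality characterization, the case $|V(G)|=1$ is immediate because then $\beta(G,x)=x$ and $\delta=0$. For the converse, I would assume $|V(G)|\geq 2$ and, using that $G$ is connected, pick a neighbor $u$ of $v$ and take $H_2=\{u,v\}$ in the chain above. A direct expansion of \eqref{inducedlmp} gives
$$\beta(G,x)_{[H_2]}=(x-\delta)(x-d_G(u))-1,$$
a quadratic with discriminant $(\delta-d_G(u))^2+4>0$, whose smaller root $\tfrac{1}{2}\bigl(\delta+d_G(u)-\sqrt{(\delta-d_G(u))^2+4}\bigr)$ is strictly less than $\delta$ since $\sqrt{(\delta-d_G(u))^2+4}>|d_G(u)-\delta|$. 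Chaining this with the interlacing inequalities above then forces $\lambda_{\min}(\beta(G,x))<\delta$, completing the characterization.

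The delicate point is that Theorem \ref{interlace} provides strict separation only for the \emph{largest} zero when the subgraph is connected, not for the smallest, so strictness cannot be harvested at a generic step of the chain. The remedy, which motivates the special choice of $H_2=\{u,v\}$, is to install strictness at the very bottom of the chain: the single-edge matching of $\{u,v\}$ contributes the ``$-1$'' term that separates the smaller root of $\beta(G,x)_{[H_2]}$ from $\delta$, and the monotone chain then transports this strict gap all the way up to $\beta(G,x)$.
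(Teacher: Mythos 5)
Your proposal is correct and follows essentially the same route as the paper: both establish strictness at the two-vertex induced subgraph $\{u,v\}$ (with $d_G(u)=\delta$) via the quadratic $(x-\delta)(x-d_G(v))-1$, whose smaller root lies strictly below $\delta$, and then transport the bound up to $G$ using the vertex-interlacing of Theorem \ref{interlace}. Your explicit chain of induced subgraphs and the remark that strictness must be installed at the bottom (since the interlacing theorem gives strict separation only for the largest zero) merely spell out what the paper leaves implicit in ``The result follows from this fact and Theorem \ref{interlace}.''
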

\begin{proof}
The upper bound is trivial if $|V(G)|=1$.
Assume that $|V(G)| \geq 2$.
Consider an edge $uv \in E(G)$ with $d_G(u)=\delta$, and let $H$ be the subgraph of $G$ induced by $\{u,v\}$.
Then
$$
\beta(G, x)_{[H]} =(x-\delta)(x-d_G(v))-1=x^2-(2\delta+a)x+\delta(\delta+a)-1,
$$
where $a=d_G(v)-\delta \geq 0$.
By the quadratic formula, we obtain that
\begin{eqnarray*}
\lambda_{\min}( \beta(G, x)_{[H]})
&=&\frac{1}{2}\left(2\delta+a -\sqrt{  ( 2\delta+a )^2-4\big(\delta(\delta+a)-1\big) }\right)\\
&=&\frac{1}{2}\left(2\delta+ a - \sqrt{ a^2+4 }\right)\\
&< &\delta.
\end{eqnarray*}
The result follows from this fact and Theorem \ref{interlace}.
\end{proof}


It is known that the sequence of the Laplacian eigenvalues of a graph
majorizes its degree sequence, see \cite{Brouwer} or \cite{Grone}.
Inspired by this classical result in spectral graph theory,
we next prove that the zero sequence of the Laplacian matching polynomial of a graph majorizes its degree sequence.
Let us begin with the definition of majorization.
We rearrange the components of $\x =(\x_1, \ldots, \x_n)\in \R^n$ in nonincreasing
order as $\x_{[1]}\geq \cdots \geq \x_{[n]}$.

\begin{definition}
Let $\x =(\x_1, \ldots, \x_n),$ $\y =(\y_1, \ldots, \y_n) \in \R^n$. If for every $k=1, \ldots,n,$
$$\sum_{i=1}^k \x_{[i]} \leq \sum_{i=1}^k\y_{[i]},$$ 
then we say that $\y$ weakly majorizes $\x$.
If $\y$ weakly majorizes $\x$ and $\sum_{i=1}^n \x_i=\sum_{i=1}^n \y_i$, then we say that $\y$ majorizes $\x$.
\end{definition}

To prove the main result of this section, we need the following lemma, which can be immediately obtained by comparing the coefficient of $x^{|V(H)|-1}$ on two sides of \eqref{inducedlmp}.
\begin{lemma}\label{coefficientlemma}
Let $H$ be a subgraph of a graph $G$.
Then the sum of zeros of $\beta(G, x)_{[H]}$ is equal to $\sum_{v\in V(H)}d_G(v)$.
In particular, the sum of zeros of $\beta(G, x)$ equals $2|E(G)|$.
\end{lemma}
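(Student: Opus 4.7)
The plan is to read off the coefficient of $x^{|V(H)|-1}$ directly from the defining sum \eqref{inducedlmp} and then invoke Vieta's formulas. Since every principal Laplacian matching polynomial is known (by Theorem \ref{gsubdivisiontheorem} combined with the reality of zeros of matching polynomials) to be real-rooted, and since $\beta(G,x)_{[H]}$ is monic of degree $|V(H)|$ (the empty matching contributes the product $\prod_{v \in V(H)}(x - d_G(v))$, and every other term in the sum has strictly smaller degree), Vieta tells us that the sum of its zeros equals the negative of the coefficient of $x^{|V(H)|-1}$.

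First, I would isolate the contribution of each matching $M \in \mathcal{M}(H)$ to the coefficient of $x^{|V(H)|-1}$. For $M = \emptyset$, the factor is $\prod_{v \in V(H)}(x - d_G(v))$, whose coefficient of $x^{|V(H)|-1}$ is $-\sum_{v \in V(H)} d_G(v)$. For any nonempty matching $M$, the associated polynomial $\prod_{v \in V(H)\setminus V(M)}(x - d_G(v))$ has degree $|V(H)| - 2|M| \leq |V(H)| - 2$, so it contributes nothing to the coefficient of $x^{|V(H)|-1}$. Hence
\[
[x^{|V(H)|-1}]\,\beta(G,x)_{[H]} = -\sum_{v \in V(H)} d_G(v),
\]
and Vieta's formula yields the first claim.

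For the \emph{in particular} statement, I would specialize to $H = G$, so that $\beta(G,x)_{[G]} = \beta(G,x)$, and invoke the handshake identity $\sum_{v \in V(G)} d_G(v) = 2|E(G)|$.

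There is no real obstacle here: the entire argument is a one-line coefficient comparison together with the trivial degree bound $|V(H)| - 2|M| \leq |V(H)| - 2$ for $M \neq \emptyset$. The only thing worth being careful about is making explicit that the polynomial is monic, so that the elementary symmetric function $e_1$ of the roots equals exactly the negative of the subleading coefficient (no sign or normalization issue).
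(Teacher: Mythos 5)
Your argument is exactly the one the paper intends: the lemma is stated there with the remark that it follows "by comparing the coefficient of $x^{|V(H)|-1}$" in \eqref{inducedlmp}, which is precisely your observation that only the empty matching contributes to that coefficient, followed by Vieta's formula for the monic polynomial $\beta(G,x)_{[H]}$. The proposal is correct and matches the paper's approach; the appeal to real-rootedness is not even needed, since Vieta applies to the full multiset of complex roots.
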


\begin{theorem}\label{zerosmajorizedegree}
The zero sequence of the Laplacian matching polynomial of a graph majorizes its degree sequence.
\end{theorem}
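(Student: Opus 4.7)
The plan is to iterate the vertex-interlacing Theorem \ref{interlace} along a flag of induced subgraphs ordered by decreasing vertex degree, and then combine the resulting pointwise comparison of zeros with the trace-type identity of Lemma \ref{coefficientlemma}.

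Label the vertices of $G$ as $v_1, \ldots, v_n$ with $d_G(v_1) \geq \cdots \geq d_G(v_n)$, and for each $k$ set $H_k = G - \{v_{k+1}, \ldots, v_n\}$, so that $H_k = H_{k+1} - v_{k+1}$ and $H_n = G$. Write the zeros of $\beta(G,x)=\beta(G,x)_{[H_n]}$ in nonincreasing order as $\mu_{[1]} \geq \cdots \geq \mu_{[n]}$, and for each $k$ let $\tau^{(k)}_{[1]} \geq \cdots \geq \tau^{(k)}_{[k]}$ be the zeros of $\beta(G,x)_{[H_k]}$. Applying Theorem \ref{interlace} with $H = H_{j+1}$ and $v = v_{j+1}$ tells us that the zeros of $\beta(G,x)_{[H_j]}$ interlace those of $\beta(G,x)_{[H_{j+1}]}$; rewriting this in nonincreasing order gives $\tau^{(j)}_{[i]} \leq \tau^{(j+1)}_{[i]}$ for $1 \leq i \leq j$, and chaining these from $j = k$ up to $j = n-1$ yields $\tau^{(k)}_{[i]} \leq \mu_{[i]}$ for $i = 1, \ldots, k$.

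Next, Lemma \ref{coefficientlemma} supplies two trace identities, namely
$$\sum_{i=1}^k \tau^{(k)}_{[i]} = \sum_{v \in V(H_k)} d_G(v) = \sum_{j=1}^k d_G(v_j), \qquad \sum_{i=1}^n \mu_{[i]} = 2|E(G)| = \sum_{j=1}^n d_G(v_j),$$
where the first equality uses crucially that $H_k$ was built from the $k$ vertices of largest degree, so that $V(H_k)$ carries exactly the top $k$ degrees. Summing the iterated interlacing inequalities then produces $\sum_{i=1}^k \mu_{[i]} \geq \sum_{j=1}^k d_G(v_j)$ for every $k$, which is weak majorization of the degree sequence by the zero sequence, and the second identity with $k = n$ upgrades weak majorization to majorization.

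There is no serious obstacle once Theorem \ref{interlace} is in hand; the delicate point is the choice of filtration. The induced subgraphs must be taken on the vertices of \emph{largest} degree so that partial sums of zeros of $\beta(G,x)_{[H_k]}$ coincide with partial sums of the nonincreasingly ordered degree sequence, and the interlacing must be chained all the way from $H_k$ up to $G$ rather than stopped at an intermediate level; any other filtration would give a weaker comparison that fails to match the combinatorial side of the majorization condition.
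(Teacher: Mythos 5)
Your proof is correct and follows essentially the same route as the paper: the same filtration by induced subgraphs on the vertices of largest degree, the vertex-interlacing theorem applied along that filtration, and the trace identity of Lemma \ref{coefficientlemma} to convert weak majorization into majorization. The only cosmetic difference is that you chain the pointwise interlacing inequalities directly from level $k$ up to $G$ and invoke the trace identity at level $k$, whereas the paper packages the same comparison as an induction on the level with a majorization invariant.
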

\begin{proof}
 Let $d_G(v_1)\geq \cdots \geq d_G(v_n)$ be the degree sequence of a graph $G$,
and let $H_i$ be the subgraph of $G$ induced by the vertex set $\{v_1, \ldots,v_i\}$ for $1\leq i\leq n$.
We claim that for any $1\leq i\leq n$,
the zero sequence of $\beta(G, x)_{[H_i]}$  majorizes the degree sequence
$d_G(v_1)\geq  \cdots \geq d_G(v_{i})$, and the assertion follows form the case $i=n$.

We prove the claim by induction on $i$.
If $i=1$, then $\beta(G, x)_{[H_1]}=x-d_G(v_1)$. So the claim holds in this case.
Assume that the claim holds for $1< i<n$.
We now consider the case $i+1$.
By Theorem \ref{interlace}, the zeros of $\beta(G,x)_{[H_i]}$ interlace the zeros of $\beta(G,x)_{[H_{i+1}]}$.
Thus
the sequence of  the first largest $i$ zeros of $\beta(G,x)_{[H_{i+1}]}$
weakly majorizes
the sequence of the zeros of $\beta(G,x)_{[H_i]}$,
where the later majorizes the degree sequence $d_G(v_1)\geq  \cdots \geq d_G(v_{i})$ by the induction hypothesis.
This suggests that the sequence of  the  first largest $i$ zeros of $\beta(G,x)_{[H_{i+1}]}$ weakly majorizes the degree sequence $d_G(v_1) \geq \cdots \geq d_G(v_{i})$. Combining the fact provided by  Lemma \ref{coefficientlemma} that the sum of zeros of $\beta(G, x)_{[H_{i+1}]}$ is equal to
$\sum_{v\in V(H_{i+1})}d_G(v)$, the claim follows.
\end{proof}

Let $G$ be a connected graph of order $n\geq 2$ with degree sequence $d_1\geq  \cdots \geq d_n$.
The known result due to Grone \cite{GroneLMA} states that the sequence  of  the Laplacian eigenvalues  of $G$
majorizes the sequence $$(d_1+1,  d_2,\ldots, d_{n-1},d_n-1):=\mathbf{\widehat{d}}(G).$$
This raises a natural question:
Whether the zero sequence of $\beta(G, x)$ majorizes the sequence  $\mathbf{\widehat{d}}(G)$?
However, the answer is negative for all connected graphs since we have the following result.

\begin{proposition}\label{maxmimbound}
Let $G$ be a connected graph with minimum degree $1$.
Then the zero sequence of $\beta(G, x)$ majorizes the sequence  $\mathbf{\widehat{d}}(G)$ if and only if $G$ is a tree.
\end{proposition}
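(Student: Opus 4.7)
The plan is to prove the two directions separately.

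For the ``if'' direction, suppose $G=T$ is a tree with at least two vertices. I would invoke the identity $\beta(T,x)=\phi(L(T),x)$, where $L(T)=D(T)-A(T)$ is the standard Laplacian matrix. This follows from Mohammadian's representation \cite{Mohammadian} of $\beta(G,x)$ as the average of the Laplacian characteristic polynomials over all edge-signings of $G$: on a tree every signing is switching-equivalent to the all-positive signing and switching preserves the Laplacian spectrum, so every term in this average equals $\phi(L(T),x)$. Applying Grone's theorem, cited immediately before the proposition, then gives that the zero sequence of $\beta(T,x)$ majorizes $\mathbf{\widehat{d}}(T)$.

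For the ``only if'' direction, suppose $G$ is connected with minimum degree $1$ and is not a tree. I claim the majorization condition already fails at $k=n-1$. Since $d_n=1$, the entry $d_n-1=0$ is the smallest component of $\mathbf{\widehat{d}}(G)$, so $\sum_{i=1}^{n-1}\widehat{d}_{[i]}=2|E(G)|$. By Lemma \ref{coefficientlemma}, $\sum_{i=1}^{n}\lambda_{[i]}=2|E(G)|$ as well, so the inequality at $k=n-1$ reduces to $\lambda_{[n]}\le 0$. But every zero of $\beta(G,x)$ is nonnegative (by Theorem \ref{gsubdivisiontheorem}, as the squares of the zeros of $\alpha(S_G,x)$), so this forces $\beta(G,0)=0$, i.e., $a_n=0$ in the notation of Theorem \ref{xomexpar}. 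To preclude this, pick any spanning tree $T^{*}$ of $G$; since $G$ is not a tree there is an edge $e\in E(G)\setminus E(T^{*})$, and $T^{*}+e$ is a spanning connected unicyclic subgraph of $G$ with $n$ edges. This contributes weight $2$ to $a_n$ via Theorem \ref{xomexpar}, giving $a_n\ge 2>0$, a contradiction. Hence majorization must fail.

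The main obstacle is simply pinning down $\beta(T,x)=\phi(L(T),x)$ for trees: once this is accepted (either via the signed-graph viewpoint above, or by a short induction on $|V(T)|$ using the pendant-vertex recurrence implicit in Theorem \ref{gsubdivisiontheorem} combined with the standard Laplacian cofactor expansion along a pendant), everything else is a quick bookkeeping argument with Lemma \ref{coefficientlemma} and the TU-subgraph description of the constant term given by Theorem \ref{xomexpar}.
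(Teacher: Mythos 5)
Your proof is correct, and its overall skeleton matches the paper's: both directions hinge on the observation that majorization at level $k=n-1$, combined with the trace identity of Lemma~\ref{coefficientlemma} and the nonnegativity of the zeros, forces $\lambda_{\min}(\beta(G,x))\le d_n-1=0$ and hence $\lambda_{\min}(\beta(G,x))=0$; and both handle the tree case by the identity $\beta(T,x)=\phi(L(T),x)$ together with Grone's theorem. The one genuine difference is how you certify that $\lambda_{\min}(\beta(G,x))=0$ implies $G$ is a tree: the paper simply cites \cite[Corollary 2.3(2)]{Zhang}, whereas you derive it from the paper's own Theorem~\ref{xomexpar} by exhibiting a spanning connected unicyclic TU-subgraph $T^{*}+e$ of weight $2$, forcing the constant term $a_n\ge 2$ when $G$ is not a tree. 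This makes the argument self-contained within the paper and is arguably preferable, since it showcases the combinatorial interpretation of the coefficients proved in Section~\ref{coefficientsection}; the paper's version is shorter but leans on an external reference. Your sketch of $\beta(T,x)=\phi(L(T),x)$ via switching-equivalence of signings on a tree is also sound, though the paper is content to cite \cite[Theorem 3.3]{Mohammadian} for it.
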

\begin{proof}
If the zero sequence of $\beta(G, x)$ majorizes the sequence  $\mathbf{\widehat{d}}(G)$,
then it is clear that $\lambda_{\min}(\beta(G, x))\leq  \delta(G)-1=0.$  Note that all zeros of the Laplacian matching polynomial of a graph are nonnegative. Then,  $\lambda_{\min}(\beta(G, x))=0$.
By \cite[Corrollay 2.3 (2)]{Zhang}, $G$ is a tree.

Conversely, if $G$ is a tree, then \cite[Theorem 3.3]{Mohammadian} suggests that $\beta(G, x)$ coincides with the Laplacian characteristic polynomial of $G$.
Thus, the zero sequence of $\beta(G, x)$ majorizes the sequence  $\mathbf{\widehat{d}}(G)$ by
Grone's theorem \cite{GroneLMA}.
\end{proof}

\section{A dual relation}\label{dualitysection}

This section is devoted  to establishing the dual relation between the Laplacian matching polynomial and the characteristic ploynomial of the signless Laplacian matrix.
Recall that the matrix $Q(G)=D(G)+A(G)$ is called the {\it signless Laplacian matrix}  of a graph $G$, where $D(G)$ denotes the diagonal matrix indexed by ${V}(G)$ with $d_G(v)$ in the $v$-th diagonal position.
Let $B_G$ be the vertex-edge incidence matrix of $G$.
Then one may write the adjacency matrix of $S_G$ as
\[
A(S_G) =
\left(
\begin{array}{cccc}
0& B_G\\
B_G^\top& 0\\
\end{array}
\right).
\]
It is well known that (see e.g., \cite[p. 63]{Cvet2}, \cite{Cvet3} or \cite[Theorem 1]{Zhou}):
\begin{equation} \label{add}
\phi(A(S_G), x)=x^{|E(G)|-|V(G)|} \phi(Q(G), x^2),
\end{equation}
which establishes an identity between $\phi(A(S_G), x)$ and $\phi(Q(G), x^2)$.
The following result is an extension of \eqref{add}, which can be considered as an analogue
of Theorem \ref{gsubdivisiontheorem}.

\begin{lemma}\label{subdivisionq1}
Let $G$ be a graph, and let $W$ be a subset of $V(G)$. Then
$$\phi(A(S_G-W), x)=x^{|E(G)|-|V(G)|+|W|} \phi(Q(G)_{[G-W]}, x^2),$$
where $Q(G)_{[G-W]}$ denotes the principle submatrix of $Q(G)$ indexed by $V(G-W)$.
\end{lemma}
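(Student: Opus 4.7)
The plan is to mimic the classical proof of \eqref{add} using Schur complements, but applied to a restricted incidence matrix. Let $n=|V(G)|$, $m=|E(G)|$, and write $\widetilde{B}$ for the submatrix of the vertex-edge incidence matrix $B_G$ obtained by deleting the rows indexed by $W$; so $\widetilde{B}$ has size $(n-|W|)\times m$. Since $S_G-W$ is bipartite with parts $V(G)\setminus W$ and $E(G)$ (each edge $\{u,v\}\in E(G)$ contributes the edges $\{e,u\}$ and $\{e,v\}$ to $S_G$, and these survive in $S_G-W$ precisely when the endpoint is not in $W$), one gets the block form
$$A(S_G-W)=\begin{pmatrix} 0 & \widetilde{B} \\ \widetilde{B}^\top & 0 \end{pmatrix}.$$

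Next I would compute $\phi(A(S_G-W),x)$ as the determinant of $\begin{pmatrix} xI & -\widetilde{B} \\ -\widetilde{B}^\top & xI\end{pmatrix}$ by applying the Schur complement to the bottom-right $m\times m$ block. For $x\neq 0$ this yields
$$\phi(A(S_G-W),x)=\det(xI_m)\,\det\!\bigl(xI_{n-|W|}-\widetilde{B}(xI_m)^{-1}\widetilde{B}^\top\bigr)=x^{m-(n-|W|)}\,\phi(\widetilde{B}\widetilde{B}^\top,x^2).$$
The exponent is exactly $|E(G)|-|V(G)|+|W|$, matching the right-hand side.

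It remains to identify $\widetilde{B}\widetilde{B}^\top$ with $Q(G)_{[G-W]}$. I would invoke the well-known identity $B_G B_G^\top=Q(G)$ (the $(u,v)$-entry counts common incident edges, giving $d_G(u)$ on the diagonal and $1$ on adjacencies). Restricting the rows of $B_G$ to $V(G)\setminus W$ corresponds exactly to taking the principal submatrix of $B_G B_G^\top$ indexed by $V(G)\setminus W$, so $\widetilde{B}\widetilde{B}^\top = Q(G)_{[G-W]}$.

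There is no deep obstacle: the only wrinkle is that the Schur-complement computation is carried out under the assumption $x\neq 0$. I would close the argument by noting that both sides of the claimed identity are polynomials in $x$, hence the equality on the Zariski-dense set $\{x\neq 0\}$ forces the identity everywhere. Alternatively one can directly check the constant terms, but the polynomial-identity argument is cleaner and avoids any case analysis.
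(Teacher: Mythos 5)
Your proposal is correct and follows essentially the same route as the paper: the paper also restricts the incidence matrix $B_G$ to the rows indexed by $V(G)\setminus W$, writes $A(S_G-W)$ in the corresponding block form, applies the Schur complement formula, and identifies the product with $Q(G)_{[G-W]}$. Your extra remark handling the case $x=0$ via a polynomial-identity argument is a minor point of care that the paper leaves implicit.
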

\begin{proof}
Let $M$ be the submatrix of $B_G$ whose rows are indexed by $V(G)\setminus W$ and columns are indexed by $E(G)$.
It is not hard to check that $MM^\top=Q(G)_{[G-W]}$ and
\[
A(S_G-W) =
\left(
\begin{array}{cccc}
0& M\\
M^\top& 0\\
\end{array}
\right).
\]
Thus, using the Schur complement formula \cite[Theorem 2.7.1]{Brouwer}, we obtain that
\begin{eqnarray*}
\phi(A(S_G-W), x) &=&
 \det
\left(
\begin{array}{cccc}
xI_n& -M\\
-M^\top& xI_m\\
\end{array}
\right) \\
  ~ &=& x^{m-n}\det(x^2I_n-MM^\top)\\
  ~ &=& x^{|E(G)|-|V(G)|+|W|} \phi(Q(G)_{[G-W]}, x^2),
\end{eqnarray*}
where $n=|V(G)\setminus W|$ and $m=|E(G)|$, as desired.
\end{proof}

We are now ready to state and prove the main result of this section, which can be viewed as an analogue of Theorem \ref{matchingdualitythm}.

\begin{theorem}\label{Qdualitythm2}
Let $\mathcal{C}(G)$ be the set of all $2$-regular subgraphs of $G$. Then
\begin{equation}\label{Qdualitythm2eq2}
\phi(Q(G), x)= \beta(G,x)+ \sum_{C \in \mathcal{C}(G)}(-2)^{\omega(C)}\beta(G, x)_{[G-C]},
\end{equation}
and
\begin{equation}\label{Qdualitythm2eq1}
\beta(G,x)=\phi(Q(G), x)+\sum_{C \in \mathcal{C}(G)}2^{\omega(C)} \phi(Q(G)_{[G-C]}, x).
\end{equation}
\end{theorem}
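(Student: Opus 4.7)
The plan is to derive both \eqref{Qdualitythm2eq2} and \eqref{Qdualitythm2eq1} by applying the classical duality in Theorem \ref{matchingdualitythm} to the subdivision graph $S_G$ and then translating every term back through the subdivision framework provided by Theorem \ref{gsubdivisiontheorem} and Lemma \ref{subdivisionq1}. The two target identities are simply the Laplacian analogues of the two identities in Theorem \ref{matchingdualitythm}, so after setting up the correct dictionary between $G$ and $S_G$ both parts will fall out from parallel calculations, differing only in the roles of $\alpha(\cdot)$ versus $\phi(A(\cdot))$.

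The crucial preparatory step is to identify the $2$-regular subgraphs. Because $S_G$ is bipartite with bipartition $\{V(G), E(G)\}$, every cycle of $S_G$ has even length and alternates between the two parts, hence is of the form $S_C$ for a unique cycle $C$ of $G$; this extends to a bijection $\mathcal{C}(G)\to\mathcal{C}(S_G)$, $C\mapsto S_C$, under which $\omega(S_C)=\omega(C)$. Applied to $S_G$, Theorem \ref{matchingdualitythm} produces terms of the form $\alpha(S_G-V(S_C),y)$ and $\phi(A(S_G-V(S_C)),y)$, whereas Theorem \ref{gsubdivisiontheorem} and Lemma \ref{subdivisionq1} are phrased in terms of $S_G-W$ with $W\subseteq V(G)$. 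I would bridge them by observing that once $V(C)$ is deleted from $S_G$, each of the $|E(C)|$ edge-vertices corresponding to $E(C)$ becomes isolated, so $S_G-V(C)$ is the disjoint union of $S_G-V(S_C)$ with $|E(C)|$ isolated vertices. Adjoining isolated vertices multiplies both the matching polynomial and the characteristic polynomial by $y^{|E(C)|}$, and combining this with Theorem \ref{gsubdivisiontheorem} and Lemma \ref{subdivisionq1} applied to $W=V(C)$, together with $|V(C)|=|E(C)|$, yields the clean identities
$$\alpha(S_G-V(S_C),y)=y^{|E(G)|-|V(G)|}\beta(G,y^2)_{[G-C]},\qquad \phi(A(S_G-V(S_C)),y)=y^{|E(G)|-|V(G)|}\phi(Q(G)_{[G-C]},y^2).$$

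Substituting these, together with $\alpha(S_G,y)=y^{|E(G)|-|V(G)|}\beta(G,y^2)$ and $\phi(A(S_G),y)=y^{|E(G)|-|V(G)|}\phi(Q(G),y^2)$, into the two formulas of Theorem \ref{matchingdualitythm} for $S_G$, every term picks up the common factor $y^{|E(G)|-|V(G)|}$; cancelling it and setting $x=y^2$ delivers \eqref{Qdualitythm2eq2} and \eqref{Qdualitythm2eq1}, respectively. The main obstacle is precisely the bookkeeping of the $|E(C)|$ isolated vertices that appear when comparing $S_G-V(S_C)$ with $S_G-V(C)$: the equality $|V(C)|=|E(C)|$ for $2$-regular $C$ is exactly what makes the spurious $y^{|E(C)|}$ factor cancel against the $|V(C)|$ in the exponent produced by Theorem \ref{gsubdivisiontheorem} and Lemma \ref{subdivisionq1}, so the whole argument reduces to a uniform substitution once this cancellation has been recognised.
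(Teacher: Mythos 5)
Your proposal is correct and follows essentially the same route as the paper: apply Theorem \ref{matchingdualitythm} to $S_G$, use the identification $\mathcal{C}(S_G)=\{S_C:C\in\mathcal{C}(G)\}$, account for the $|E(C)|$ isolated edge-vertices that distinguish $S_G-V(C)$ from $S_G-S_C$, and translate each term via Theorem \ref{gsubdivisiontheorem}, Lemma \ref{subdivisionq1}, and the identities \eqref{subdivisiontheoremeqqq} and \eqref{add}. The only cosmetic difference is that you justify the bijection $C\mapsto S_C$ via bipartiteness, which the paper asserts without comment.
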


\begin{proof}
	We begin with the proof of \eqref{Qdualitythm2eq2}.
Applying \eqref{matchingdualityeq1} to the graph $S_G$, we have
	\begin{align}\label{Dualpfequality211}
	\phi(A(S_G), x)
		=\alpha(S_G,x)+\sum_{C \in \mathcal{C}(S_G)}(-2)^{\omega(C)}\alpha(S_G-C, x).
	\end{align}
	Note that $\mathcal{C}(S_G)=\{S_C:C\in \mathcal{C}(G)\}$. Substituting \eqref{add} and \eqref{subdivisiontheoremeqqq} into \eqref{Dualpfequality211},  we can deduce that 
	\begin{equation}\label{Dualpfequality214}
		\phi(Q(G), x^2)=\beta(G,x^2)
		+x^{|V(G)|-|E(G)|}\sum_{C \in \mathcal{C}(G)}(-2)^{\omega(C)}\alpha(S_G-S_C, x).
	\end{equation}
	Moreover, for any $C \in \mathcal{C}(G)$, obverse that $|V(C)|=|E(C)|$
	and $S_G-V(C)$  consists of $S_G-S_C$ and $|E(C)|$ isolated vertices.
	So one may check that
	\begin{equation} \label{Dualpfequality216}
		\alpha(S_G-S_C, x)
		=\alpha(S_G-V(C), x) x^{-|E(C)|}
		=x^{|E(G)|-|V(G)|} \beta(G, x^2)_{[G-C]},
	\end{equation}
	where the last equality follows from Theorem \ref{gsubdivisiontheorem}.
	Combining \eqref{Dualpfequality214} and \eqref{Dualpfequality216}, we obtain that
	$$
	\phi(Q(G), x^2)=\beta(G,x^2)
	+\sum_{C \in \mathcal{C}(S_G)}(-2)^{\omega(C)}\beta(G, x^2)_{[G-C]},
	$$
which implies \eqref{Qdualitythm2eq2}, as desired.

	It remains to prove \eqref{Qdualitythm2eq1}.
	Applying \eqref{matchingdualityeq2} to the graph $S_G$, we have
	\begin{equation} \label{add8}
	\alpha(S_G,x)=\phi(A(S_G), x)+\sum_{C \in \mathcal{C}(S_G)}2^{\omega(C)}\phi(A(S_G-C), x).
	\end{equation}
Substituting  \eqref{subdivisiontheoremeqqq} and \eqref{add} into \eqref{add8}, we can deduce that 
\begin{equation}\label{Dualpfequality224}
\beta(G, x^2)=\phi(Q(G), x^2)
+x^{|V(G)|-|E(G)|}\sum_{C \in \mathcal{C}(G)}2^{\omega(C)}\phi(A(S_G-S_C), x).
\end{equation}
By a discussion similar to \eqref{Dualpfequality216}, we have
	\begin{equation} \label{Dualpfequality226}
\phi (A(S_G-S_C), x )
=\phi (A(S_G-V(C)), x )x^{-|E(C)|}
=x^{|E(G)|-|V(G)|}   \phi(Q(G)_{[G-C]}, x^2),
\end{equation}
where the last equality follows from Lemma \ref{subdivisionq1}. Combining \eqref{Dualpfequality224} and \eqref{Dualpfequality226}, we obtain that 
	$$
	\beta(G,x^2)=\phi(Q(G), x^2)+\sum_{C \in \mathcal{C}(G)}2^{\omega(C)} \phi(Q(G)_{[G-C]}, x^2),
	$$
	which implies \eqref{Qdualitythm2eq1}. 	The proof is completed.
\end{proof}

By Theorem \ref{matchingdualitythm}, $G$ is a forest if and only if $\alpha(G, x)=\phi(A(G), x)$, see \cite[Theorem 4.3]{Cvet} or \cite[Corollary 4.2]{Godsil1}.
Mohammadian \cite[Theorem 3.3]{Mohammadian} prove that $G$ is a forest if and only if  $\beta(G, x)=\phi(L(G), x)$. The following analogue of these facts is immediately obtained by  Theorem \ref{Qdualitythm2}.

\begin{corollary}\label{forest}
A graph $G$ is a forest if and only if $\beta(G, x)=\phi(Q(G), x)$.
\end{corollary}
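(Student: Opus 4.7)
The plan is to deduce both directions directly from identity \eqref{Qdualitythm2eq1} of Theorem \ref{Qdualitythm2}, namely
$$\beta(G,x)=\phi(Q(G), x)+\sum_{C \in \mathcal{C}(G)}2^{\omega(C)} \phi(Q(G)_{[G-C]}, x).$$

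For the forward direction, I would observe that a forest contains no cycles, hence no $2$-regular subgraphs, so $\mathcal{C}(G)=\emptyset$ and the summation vanishes, yielding $\beta(G,x)=\phi(Q(G),x)$ at once.

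For the converse, I would proceed by contrapositive: assume $G$ is not a forest, so $G$ contains at least one cycle and $\mathcal{C}(G)\neq\emptyset$. It then suffices to show that the polynomial
$$P(x):=\sum_{C \in \mathcal{C}(G)}2^{\omega(C)} \phi(Q(G)_{[G-C]}, x)$$
is nonzero. For each $C\in\mathcal{C}(G)$, the polynomial $\phi(Q(G)_{[G-C]},x)$ is monic of degree $|V(G)|-|V(C)|$. Let $k=\min_{C\in\mathcal{C}(G)}|V(C)|$. Then the coefficient of $x^{|V(G)|-k}$ in $P(x)$ equals
$$\sum_{\substack{C\in \mathcal{C}(G)\\ |V(C)|=k}}2^{\omega(C)},$$
which is a strictly positive integer. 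Hence $P(x)\neq 0$, so $\beta(G,x)\neq\phi(Q(G),x)$, completing the contrapositive.

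The only thing one might worry about is cancellation among the terms of the sum, but this is ruled out immediately: all coefficients $2^{\omega(C)}$ are positive, and every $\phi(Q(G)_{[G-C]},x)$ is monic, so the top-degree terms cannot cancel. Thus there is no real obstacle, and the corollary follows as a short consequence of Theorem \ref{Qdualitythm2}, exactly paralleling the argument that $\alpha(G,x)=\phi(A(G),x)$ characterizes forests via Theorem \ref{matchingdualitythm}.
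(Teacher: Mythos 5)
Your proof is correct and follows the same route the paper intends: the paper states the corollary as an immediate consequence of Theorem \ref{Qdualitythm2} without writing out the details, and your argument (the sum vanishes when $\mathcal{C}(G)=\emptyset$, and otherwise the monic terms of top degree $|V(G)|-k$, $k$ the girth, have strictly positive total coefficient, so no cancellation) is exactly the natural way to make that "immediate" step rigorous. Nothing to correct.
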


In the following consequence, we give an upper bounds on the largest zero of
the Laplacian matching polynomial.

\begin{corollary}\label{mQinequality}
Let $G$ be a connected graph. Then
$$
 \lambda_{\max}(\beta(G, x))\leq \rho(Q(G)),
$$
with equality holds if and only if $G$ is a tree, where $\rho(Q(G))$ is the spectral radius of $Q(G)$.
\end{corollary}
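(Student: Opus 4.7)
The plan is to apply the duality identity \eqref{Qdualitythm2eq1} of Theorem \ref{Qdualitythm2} together with elementary positivity properties of principal submatrices of the signless Laplacian. Set $\rho=\rho(Q(G))$ and observe that both $\beta(G,x)$ and $\phi(Q(G),x)$ are monic of degree $|V(G)|$, while each $\phi(Q(G)_{[G-C]},x)$ is monic of degree $|V(G)|-|V(C)|$; since $Q(G)$ is real symmetric and every principal submatrix inherits this property, each characteristic polynomial appearing in \eqref{Qdualitythm2eq1} is real-rooted.

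For the inequality, I would fix an arbitrary $x>\rho$. By Cauchy interlacing, $\rho(Q(G)_{[G-C]})\leq\rho$ for every $C\in\mathcal{C}(G)$, so each $\phi(Q(G)_{[G-C]},x)>0$ (with the convention that the empty principal submatrix contributes the constant $1$). Since $\phi(Q(G),x)>0$ as well and every coefficient $2^{\omega(C)}$ is positive, identity \eqref{Qdualitythm2eq1} yields $\beta(G,x)>0$ for all $x>\rho$, and hence $\lambda_{\max}(\beta(G,x))\leq\rho$.

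For the equality case, if $G$ is a tree then Corollary \ref{forest} gives $\beta(G,x)=\phi(Q(G),x)$, whose largest roots coincide. Conversely, suppose $G$ is connected but not a tree, so that $G$ contains a cycle and $\mathcal{C}(G)\neq\emptyset$. I would evaluate \eqref{Qdualitythm2eq1} at $x=\rho$: the term $\phi(Q(G),\rho)$ vanishes, while for each $C\in\mathcal{C}(G)$ with $V(C)\subsetneq V(G)$, the irreducibility of the non-negative matrix $Q(G)$---guaranteed by the connectedness of $G$---forces the strict Perron-Frobenius gap $\rho(Q(G)_{[G-C]})<\rho$, so $\phi(Q(G)_{[G-C]},\rho)>0$; any $C$ with $V(C)=V(G)$ contributes the empty determinant $1$. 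Since every weight $2^{\omega(C)}$ is positive and the sum is non-empty, $\beta(G,\rho)>0$, forcing $\lambda_{\max}(\beta(G,x))<\rho$.

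The main obstacle lies in the equality case, where one needs a strict gap at $x=\rho$: the proof hinges on combining the strict Perron-Frobenius inequality for proper principal submatrices of an irreducible non-negative matrix with the observation that $\mathcal{C}(G)$ is non-empty precisely when the connected graph $G$ fails to be a tree.
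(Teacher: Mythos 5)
Your proof is correct and follows essentially the same route as the paper: both rest on identity \eqref{Qdualitythm2eq1} together with the Perron--Frobenius fact that proper principal submatrices of the irreducible matrix $Q(G)$ have strictly smaller spectral radius, forcing $\beta(G,x)>0$ for $x\geq\rho(Q(G))$ whenever $\mathcal{C}(G)\neq\emptyset$. Your handling of the equality case and of spanning $2$-regular subgraphs (empty principal submatrix) is slightly more explicit than the paper's, but the argument is the same.
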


\begin{proof}
Note that $Q(G)_{[G-C]}$ is a proper principle submatrix of $Q(G)$ for each $C \in \mathcal{C}(G)$.
Applying the Perron--Frobenius theorem \cite[Theorem 2.2.1]{Brouwer}, we have
$$\rho( Q(G)_{[G-C]} ) < \rho( Q(G)):=x_0.$$
So $\phi(Q(G)_{[G-C]}, x) >0$ whenever $x\geq x_0$.
It follows from Theorem \ref{Qdualitythm2} that
$$\beta(G,x)=\phi(Q(G), x)+\sum_{C \in \mathcal{C}(G)}2^{\omega(C)}\phi(Q(G)_{[G-C]}, x)\geq0$$
whenever $x\geq x_0$, which implies that  $\lambda_{\max}(\beta(G, x))\leq x_0$. Clearly, the equality holds if and only if $\mathcal{C}(G)=\emptyset$, that is, $G$ is a tree. This completes the proof.
\end{proof}

Corollary \ref{mQinequality} implies that every upper bound of the spectral radius of $Q(G)$ also holds for the largest root of $\beta(G, x)$.
For example, it is known that \cite[Proposition 3.9.1]{Brouwer} if $G$ has at least one edge, then
$$\lambda_{\max}( Q(G) )\leq \max_{\{u,v\}\in E(G)}(d_G(u)+d_G(v)),$$
with equality if and only if $G$ is regular or bipartite semiregular.
By combining this fact and Corollary \ref{mQinequality}, we immediately obtain that
\begin{equation*}\label{uppereq}
 \lambda_{\max}(\beta(G, x)) \leq \max_{\{u,v\}\in E(G)}\big(d_G(u)+d_G(v)\big),
\end{equation*}
with equality holds if and only if $G$ is a star.

\section{Concluding Remarks}

As shown in the present paper, the subdivision method provides a bridge between the matching polynomial and the Laplacian matching polynomial.
Using the method, one may transform the problem of the Laplacian matching polynomial of a graph $G$
into the problem of the matching polynomial of $S_G$, and vice versa.
It would be interesting to use this method to find more properties of the Laplacian matching polynomial.

Recently, Li and Yan \cite{LiYan} generalized the definition of the Laplacian matching polynomial from simple graphs to edge-weighted graphs as follows.
Let $G$ be a graph with the edge weight function $\w:E(G)\rightarrow \R^+$.
The {\it Laplacian matching polynomial} of the edge-weighted graph $(G,\w)$ is defined as
$$
 \beta(G,\w, x)=\sum_{M\in\mathcal{M}(G)} (-1)^{|M|} \left(  \prod_{e \in E(M)}\w(e)^2\right)
\left(\prod_{v\in V(G)\setminus V(M)}\big(x-\w(v)\big)\right),
$$
where $\w(v)$ is the sum of weights of edges incident with vertex $v$ in $G$.
Observe that $\beta(G,\w, x)=\beta(G, x)$ when $\w(e)=1$ for each $e \in E(G)$.
Similar to \eqref{subdivisiontheoremeqqq}, Li and Yan \cite[Theorem 3.3]{LiYan} proved an identity between $\beta(G,\w, x)$ and the matching polynomial of the edge-weighted subdivision graph of $(G,\w)$.
The authors would believe that the subdivision method  can be used to obtain some properties of the Laplacian matching polynomial of edge-weighted graphs, possibly including some appropriate adjustments.

\vspace{5mm}
\noindent{{\bf Acknowledgements}}
The authors would like to thank Ali Mohammadian for his helpful discussions and comments.

\noindent{{\bf Data availability statement}}  This manuscript has no associated data.

\noindent{{\bf Conflicts of interest statement}} The authors declare no conflict of interest to the content of this article.


\end{document}